\DeclareMathAlphabet{\mathmybb}{U}{bbold}{m}{n}
\begin{document}

\newtheorem{thm}{Theorem}
\newtheorem{lem}[thm]{Lemma}
\newtheorem{claim}[thm]{Claim}
\newtheorem{cor}[thm]{Corollary}
\newtheorem{prop}[thm]{Proposition} 
\newtheorem{definition}[thm]{Definition}
\newtheorem{rem}[thm]{Remark} 
\newtheorem{question}[thm]{Open Question}
\newtheorem{conj}[thm]{Conjecture}
\newtheorem{prob}{Problem}
\newtheorem{Process}[thm]{Process}
\newtheorem{Computation}[thm]{Computation}
\newtheorem{Fact}[thm]{Fact}
\newtheorem{Observation}[thm]{Observation}

\newtheorem{lemma}[thm]{Lemma}

\newcommand{\GL}{\operatorname{GL}}
\newcommand{\SL}{\operatorname{SL}}
\newcommand{\lcm}{\operatorname{lcm}}
\newcommand{\ord}{\operatorname{ord}}
\newcommand{\Op}{\operatorname{Op}}
\newcommand{\Tr}{\operatorname{Tr}}
\newcommand{\Nm}{\operatorname{Nm}}
\newcommand{\BigSquare}[1]{\raisebox{-0.5ex}{\scalebox{2}{$\square$}}_{#1}}

\numberwithin{equation}{section}
\numberwithin{thm}{section}
\numberwithin{table}{section}

\numberwithin{figure}{section}

\def\sssum{\mathop{\sum\!\sum\!\sum}}
\def\ssum{\mathop{\sum\ldots \sum}}
\def\iint{\mathop{\int\ldots \int}}

\def\wt {\mathrm{wt}}
\def\Tr {\mathrm{Tr}}

\def\SrA{\cS_r\(\cA\)}

\def\vol {{\mathrm{vol\,}}}
\def\squareforqed{\hbox{\rlap{$\sqcap$}$\sqcup$}}
\def\qed{\ifmmode\squareforqed\else{\unskip\nobreak\hfil
\penalty50\hskip1em\null\nobreak\hfil\squareforqed
\parfillskip=0pt\finalhyphendemerits=0\endgraf}\fi}

\def \ss{\mathsf{s}} 

\def \balpha{\bm{\alpha}}
\def \bbeta{\bm{\beta}}
\def \bgamma{\bm{\gamma}}
\def \blambda{\bm{\lambda}}
\def \bchi{\bm{\chi}}
\def \bphi{\bm{\varphi}}
\def \bpsi{\bm{\psi}}
\def \bomega{\bm{\omega}}
\def \btheta{\bm{\vartheta}}

\newcommand{\bfxi}{{\boldsymbol{\xi}}}
\newcommand{\bfrho}{{\boldsymbol{\rho}}}

 \def \xbar{\overline x}
  \def \ybar{\overline y}

\def\cA{{\mathcal A}}
\def\cB{{\mathcal B}}
\def\cC{{\mathcal C}}
\def\cD{{\mathcal D}}
\def\cE{{\mathcal E}}
\def\cF{{\mathcal F}}
\def\cG{{\mathcal G}}
\def\cH{{\mathcal H}}
\def\cI{{\mathcal I}}
\def\cJ{{\mathcal J}}
\def\cK{{\mathcal K}}
\def\cL{{\mathcal L}}
\def\cM{{\mathcal M}}
\def\cN{{\mathcal N}}
\def\cO{{\mathcal O}}
\def\cP{{\mathcal P}}
\def\cQ{{\mathcal Q}}
\def\cR{{\mathcal R}}
\def\cS{{\mathcal S}}
\def\cT{{\mathcal T}}
\def\cU{{\mathcal U}}
\def\cV{{\mathcal V}}
\def\cW{{\mathcal W}}
\def\cX{{\mathcal X}}
\def\cY{{\mathcal Y}}
\def\cZ{{\mathcal Z}}
\def\Ker{{\mathrm{Ker}}}

\def\NmQR{N(m;Q,R)}
\def\VmQR{\cV(m;Q,R)}

\def\Xm{\cX_{p,m}}

\def \A {{\mathbb A}}
\def \B {{\mathbb A}}
\def \C {{\mathbb C}}
\def \F {{\mathbb F}}
\def \G {{\mathbb G}}
\def \L {{\mathbb L}}
\def \K {{\mathbb K}}
\def \N {{\mathbb N}}
\def \PP {{\mathbb P}}
\def \Q {{\mathbb Q}}
\def \R {{\mathbb R}}
\def \Z {{\mathbb Z}}
\def \fS{\mathfrak S}
\def \fB{\mathfrak B}

\def\Fq{\F_q}
\def\Fqr{\F_{q^r}} 
\def\ovFq{\overline{\F_q}}
\def\ovFp{\overline{\F_p}}
\def\GL{\operatorname{GL}}
\def\SL{\operatorname{SL}}
\def\PGL{\operatorname{PGL}}
\def\PSL{\operatorname{PSL}}
\def\li{\operatorname{li}}
\def\sym{\operatorname{sym}}

\def\Mob{M{\"o}bius }

\def\fF{\EuScript{F}}
\def\M{\mathsf {M}}
\def\T{\mathsf {T}}

\def\e{{\mathbf{\,e}}}
\def\ep{{\mathbf{\,e}}_p}
\def\eq{{\mathbf{\,e}}_q}

\def\\{\cr}
\def\({\left(}
\def\){\right)}

\def\<{\left(\!\!\left(}
\def\>{\right)\!\!\right)}
\def\fl#1{\left\lfloor#1\right\rfloor}
\def\rf#1{\left\lceil#1\right\rceil}

\def\Tr{{\mathrm{Tr}}}
\def\Nm{{\mathrm{Nm}}}
\def\Im{{\mathrm{Im}}}

\def \oF {\overline \F}

\newcommand{\pfrac}[2]{{\left(\frac{#1}{#2}\right)}}

\def \Prob{{\mathrm {}}}
\def\e{\mathbf{e}}
\def\ep{{\mathbf{\,e}}_p}
\def\epp{{\mathbf{\,e}}_{p^2}}
\def\em{{\mathbf{\,e}}_m}

\def\Res{\mathrm{Res}}
\def\Orb{\mathrm{Orb}}

\def\vec#1{\mathbf{#1}}
\def \va{\vec{a}}
\def \vb{\vec{b}}
\def \vh{\vec{h}}
\def \vk{\vec{k}}
\def \vs{\vec{s}}
\def \vu{\vec{u}}
\def \vv{\vec{v}}
\def \vz{\vec{z}}
\def\flp#1{{\left\langle#1\right\rangle}_p}
\def\T {\mathsf {T}}

\def\sfG {\mathsf {G}}
\def\sfK {\mathsf {K}}

\def\mand{\qquad\mbox{and}\qquad}

\title[Distribution of $\theta-$powers]
{Distribution of $\theta-$powers and their sums}

\author[Siddharth Iyer] {Siddharth Iyer}
\address{School of Mathematics and Statistics, University of New South Wales, Sydney, NSW 2052, Australia}
\email{siddharth.iyer@unsw.edu.au}

\begin{abstract}
We refine a remark of Steinerberger (2024), proving that for $\alpha \in \mathbb{R}$, there exist integers $1 \leq b_1, \dots, b_k \leq n$ such that  
\[
\left\| \sum_{j=1}^k \sqrt{b_j} - \alpha \right\| = O(n^{-\gamma_k}),
\]  
where $\gamma_k \geq (k-1)/4$, $\gamma_2 = 1$, and $\gamma_k = k/2$ for $k = 2^m - 1$. We extend this to higher-order roots.  

Building on the Bambah–Chowla theorem, we study gaps in $\{x^{\theta} + y^{\theta} : x, y \in \mathbb{N} \cup \{0\}\}$, yielding a modulo one result with $\gamma_2 = 1$ and bounded gaps for $\theta = 3/2$.  

Given $\rho(m) \geq 0$ with $\sum_{m=1}^{\infty} \rho(m)/m < \infty$, we show that the number of solutions to  
\[
\left|\sum_{j=1}^{k} a_j^{\theta} - b\right| \leq \frac{\rho\left(\|(a_1, \dots, a_k)\|_{\infty}\right)}{\|(a_1, \dots, a_k)\|_{\infty}^{k}},  
\]  
in the variables $((a_{j})_{j=1}^{k},b) \in \mathbb{N}^{k+1}$ is finite for almost all $\theta>0$. We also identify exceptional values of $\theta$, resolving a question of Dubickas (2024), by proving the existence of a transcendental $\tau$ for which $\|n^{\tau}\| \leq n^v$ has infinitely many solutions for any $v \in \mathbb{R}$. 
\end{abstract}

\keywords{Square Root Sum Problem, $\sqrt{n} \mod 1$, Sums of Two Squares, Metric Theory}
\subjclass[2020]{11B05, 11J71, 11J81, 11J83, 11R32}

\maketitle

\tableofcontents
\section{Introduction}
For a real number $x$ we denote $\|x\|:= \min_{m\in \mathbb{Z}}|x-m|$. Steinerberger \cite{Steinerberger}, through the use of exponential sums, has shown that for a fixed $k \in \mathbb{N}$, there exist integers $1 \leq a_{1},\ldots,a_{k} \leq n$ such that 
\begin{align}
\label{Stein}
0<\left\|\sum_{j=1}^{k}\sqrt{a_{j}}\right\| = O\left(n^{-c \cdot k^{1/3}}\right) \text{ as }n \rightarrow \infty,
\end{align}
where $c$ is a constant independent of $k$. Dubickas \cite[Theorem 3]{Dubickas} proved a stronger and more generalised result. They show that when $d \geq 2$ is a positive integer then, there exists integers $1 \leq a_{1},\ldots,a_{k} \leq n$ such that 
\begin{align}
\label{Dub}
0<\left\| \sum_{j=1}^{k} \sqrt[d]{a_{j}}\right\| = O\left(n^{-(\frac{k-2}{d})-1}\right) \text{ as }n \rightarrow \infty.
\end{align}
We note that the result by Steinerberger also details the distribution modulo one property of sums of square roots, which is absent in \cite{Dubickas}. Following the third remark in \cite[p. 49]{Steinerberger}, with a change in terminology, they show that when $\alpha \in \mathbb{R}$, there exist integers $1 \leq a_{1},\ldots,a_{k} \leq n$ such that
\begin{align}
\label{SteinEqui}
\left\|\sum_{j=1}^{k}\sqrt{a_{j}}-\alpha\right\| = O\left(n^{-c \cdot k^{1/3}}\right) \text{ as }n \rightarrow \infty,
\end{align}
where the implied constant is independent of $\alpha$. We improve their result by replacing $O\left(n^{-c k^{1/3}}\right)$ with $O\left(n^{-(\frac{k-1}{4})}\right)$ in \eqref{SteinEqui}. More generally, we obtain the following theorem.
\begin{thm}
\label{main}
For every $k,d \in \mathbb{N}$ with $d \geq 2$, there exists an effective constant $C_{k,d} > 0$ so that for every $\alpha \in \R$ and $n \in \mathbb{N}$ there exist integers $1 \leq b_{1},\ldots,b_{k}\leq n$ with
\begin{align*}
\left\|\sum_{j=1}^{k}\sqrt[d]{b_{j}} - \alpha \right\| \leq C_{k,d}\cdot n^{-\gamma(k,d)},
\end{align*}
where
\[\gamma(k,d) = \frac{d^{\lfloor \log_{d}(k+1)\rfloor}-1}{d} \geq \frac{k-d+1}{d^2}.
\]
\end{thm}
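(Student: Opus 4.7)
The theorem reduces to the boundary cases $k = d^m - 1$ for $m \geq 1$, since $\gamma(k,d)$ depends only on $\lfloor \log_d(k+1) \rfloor$ and is therefore constant on each level $[d^m - 1, d^{m+1} - 2]$. For $k$ strictly inside such a level, set $k - (d^m - 1)$ of the $b_j$ equal to $1$, so that $\sqrt[d]{1} = 1$ contributes only an integer vanishing modulo $1$. I would therefore prove the theorem for each boundary value $k_m := d^m - 1$ by induction on $m$.

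The base case $m = 1$ is essentially classical: with $N := \lfloor n^{1/d} \rfloor$ and $b \in [N^d, (N+1)^d - 1]$, the fractional parts $\{\sqrt[d]{b}\}$ fill $[0,1)$ with consecutive gaps $\Theta(1/(dN^{d-1})) = O(n^{-(d-1)/d}) = O(n^{-\gamma_1})$, so a single non-trivial root (with the other $d-2$ roots set to $1$) already gives the required approximation for any $\alpha$. For the inductive step, the natural tool is the binomial expansion
\[
\sqrt[d]{N^d + c} \;=\; N + \frac{c}{d N^{d-1}} + \sum_{j \geq 2}\binom{1/d}{j}\frac{c^j}{N^{jd-1}},
\]
applied with $b_i = N^d + c_i$ for integers $c_i$ in a suitably chosen range. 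Summing over $i = 1,\dots,k_m$ yields, modulo $1$, an expression of the form $\sum_{j \geq 1}\binom{1/d}{j} p_j N^{1-jd}$, where $p_j = \sum_i c_i^j$ is the $j$-th power sum. A Chinese-Remainder-style construction using several pairwise coprime base points $N_1,\dots,N_r$ would aim to make the aggregated linear contribution have effective denominator $\sim n^{\gamma_m}$; iterating across levels should then recover the target precision $n^{-\gamma_{m+1}}$ in accordance with the recursion $\gamma_{m+1} = d\gamma_m + (d-1)/d$.

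The principal obstacle is the scale mismatch between the linear and higher-order terms in the expansion. With $|c_i| \sim N^{d-1}$ (the natural range to exhaust the linear degrees of freedom), the quadratic correction $\sum c_i^2/N^{2d-1}$ is of size $\Theta(N^{-1})$, which already exceeds the target precision $n^{-\gamma_{m+1}}$ once $m \geq 2$. Consequently, one cannot simply isolate the linear term and absorb the rest into error. The proof must show that after fixing the leading linear contribution (equivalently, after fixing $p_1$), the residual contributions from $p_2, p_3,\dots$ themselves form an $n^{-\gamma_{m+1}}$-dense set of admissible values within the corresponding fibre—a kind of \emph{local} equidistribution statement at each level. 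Adapting Dubickas's cancellation identities (which force $p_1 = 0$ to place the sum near an integer) so that they can be shifted to target arbitrary $\alpha$, while still extracting density from the higher power sums, will be the central technical ingredient.
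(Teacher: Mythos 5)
Your reduction to the boundary cases $k=d^{m}-1$ by padding with $b_{j}=1$ is fine (the paper does the same), and your base case $m=1$ is correct: consecutive fractional parts of $\sqrt[d]{b}$ in a block $[N^{d},(N+1)^{d})$ have gaps $O(n^{-(d-1)/d})$, which matches $\gamma(d-1,d)=(d-1)/d$. The problem is that the inductive step --- which is the entire content of the theorem for $m\geq 2$ --- is not proved, and you acknowledge this yourself: after choosing $|c_{i}|\sim N^{d-1}$ the quadratic term $\sum_{i}c_{i}^{2}/N^{2d-1}$ is already of size $N^{-1}\gg n^{-\gamma_{m+1}}$, and you then assert that the proof ``must show'' a local equidistribution statement for the higher power sums $p_{2},p_{3},\dots$, to be obtained by ``adapting Dubickas's cancellation identities'' together with an unspecified Chinese-Remainder-style choice of base points $N_{1},\dots,N_{r}$. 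None of this is formulated precisely, let alone established, and no mechanism is given for why passing from level $m$ to level $m+1$ should multiply the exponent by $d$ as in the recursion $\gamma_{m+1}=d\gamma_{m}+(d-1)/d$; the numerics are consistent with the claimed exponent, but nothing in the sketch produces it. As written, the central technical ingredient is missing, so this is a plan rather than a proof.

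For comparison, the paper's argument avoids binomial expansions and any induction on levels. With $\xi=\lfloor\log_{d}(k+1)\rfloor$ it works with the $d^{\xi}-1$ radicals $\sqrt[d]{f}$, where $f$ ranges over the nontrivial divisors of $p_{1}^{d-1}\cdots p_{\xi}^{d-1}$ with exponents below $d$. Besicovitch's linear independence theorem shows that integer combinations with coefficients in $\{1,\dots,H\}$ are pairwise distinct, so the pigeonhole principle produces a combination $w$ with $0<\{w\}\leq H^{-(d^{\xi}-1)}$, and a Galois-conjugation (norm) argument gives the matching lower bound $\{w\}\gg H^{-(d^{\xi}-1)}$. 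This two-sided control is exactly what allows a greedy digit-expansion at dyadic scales $H=2^{j}$ to approximate an arbitrary $\alpha$ from below with error $\ll n^{-(d^{\xi}-1)}$ while keeping all coefficients at most $n$; finally the identity $c_{f}\sqrt[d]{f}=\sqrt[d]{c_{f}^{d}f}$, with $c_{f}\leq (n/p_{1}^{d-1}\cdots p_{\xi}^{d-1})^{1/d}$, converts the combination into at most $d^{\xi}-1\leq k$ roots of integers $\leq n$, which costs a factor $d$ in the exponent and yields $\gamma(k,d)=(d^{\xi}-1)/d$. The lower bound on $\{w\}$ plus the greedy algorithm is precisely what replaces the ``local equidistribution'' step you leave open; to salvage your route you would have to prove that density claim, which is likely harder than the algebraic argument the paper uses.
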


Theorem \ref{main} establishes a bound of $O(n^{-\gamma(k,2)})$ on the largest gap in $\sum_{j=1}^{k}\sqrt{b_{j}} \mod 1$, where $b_{j} \in 1,\ldots,n$ and $n \rightarrow \infty$. This result supplements the bound of $O(n^{-2k+3/2})$ for the smallest non-zero gap, established by a direct application of Qian and Wang \cite{Qian}. Notably, their work \cite[Theorem 2]{Qian} implies the existence integers $1\leq u_{j},v_{j} \leq n$ that satisfy
\begin{align*}
0<\left|\sum_{j=1}^{k}\sqrt{u_{j}} -\sqrt{v_{j}}\right| = O(n^{-2k+3/2}).
\end{align*}
We recall for $k = 1$ and $d=2$, stronger and more subtle results on the distribution of gaps are known \cite{Gap1, Elkies}. See \cite{Cheng, Steinerberger} for further background in this study area.

Building upon work by Dubickas \cite{Dubickas}, when $\theta > 0$ and $a_{1},\ldots,a_{k}\in \N$, we study the distribution of sums $\sum_{j=1}^{k}a_{j}^{\theta} \mod 1$. Such studies yield improvements to Theorem \ref{main} by updating $\gamma(k,d)$ to $\gamma^{*}(k,d)$ where $\gamma^{*}(1,d)= 1-\frac{1}{d}$, $\gamma^{*}(2,d)= 3/2-\frac{1}{d}$ and $\gamma^{*}(k,d) = \max\{3/2-\frac{1}{d},\gamma(k,d)\}$ for $k \geq 3$.

Denoting $\{\cdot\}$ to represent the fractional part operator, the fact $\gamma^{*}(1,d)= 1-\frac{1}{d}$ follows from the following result.
\begin{lem}
When $0<\theta<1$ and $\alpha \in \mathbb{R}$, there exist an effective constant $C_{\theta} > 0$ so that
\begin{align*}
\min_{1 \leq a \leq n}\|a^{\theta}-\alpha\| \leq C_{\theta}\cdot n^{\theta-1}.
\end{align*}
\end{lem}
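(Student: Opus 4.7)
The plan is to exploit the slow growth of $a \mapsto a^{\theta}$ when $0<\theta<1$: by the mean value theorem, consecutive values satisfy $(a+1)^{\theta} - a^{\theta} \leq \theta\, a^{\theta-1}$, which for $a$ on the scale of $n$ is exactly of order $n^{\theta-1}$ --- precisely the bound we want. So the only real task is to ensure that the ordered set $\{a^{\theta}: a_{0}\leq a\leq n\}$ sweeps out a segment of length at least $1$ on the real line, so that its reduction modulo one is $O(n^{\theta-1})$-dense and in particular approximates any $\alpha$ to within $O(n^{\theta-1})$.

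Concretely, I would take $a_{0} = \lceil n/2 \rceil$. For $a_{0} \leq a \leq n-1$ and $0<\theta<1$, monotonicity of $t \mapsto t^{\theta-1}$ gives the consecutive-gap bound
\[
(a+1)^{\theta} - a^{\theta} \leq \theta\, a_{0}^{\theta-1} \leq \theta\, (n/2)^{\theta-1} = 2^{1-\theta}\theta\cdot n^{\theta-1} =: \delta_{n}.
\]
Meanwhile the total range satisfies $n^{\theta} - a_{0}^{\theta} \geq n^{\theta}(1-2^{-\theta}) - O(1)$, which exceeds $1$ once $n \geq n_{0}(\theta)$ since $\theta>0$. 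Given any $\alpha \in \R$, I would then pick the unique integer $m$ with $a_{0}^{\theta} \leq \alpha + m < a_{0}^{\theta} + 1$; by the range estimate, $\alpha + m < n^{\theta}$, so a discrete intermediate-value argument produces some $a \in \{a_{0},\dots,n-1\}$ with $a^{\theta} \leq \alpha + m \leq (a+1)^{\theta}$, whence
\[
\|a^{\theta} - \alpha\| \leq |a^{\theta} - (\alpha + m)| \leq (a+1)^{\theta} - a^{\theta} \leq \delta_{n}.
\]

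There is no genuine obstacle; the only loose end is the finite range $n < n_{0}(\theta)$, which is absorbed into $C_{\theta}$ via the universal bound $\|a^{\theta}-\alpha\| \leq 1/2 \leq \tfrac{1}{2}n_{0}(\theta)^{1-\theta}\cdot n^{\theta-1}$ valid there. Taking $C_{\theta} = \max\bigl(2^{1-\theta}\theta,\ \tfrac{1}{2}n_{0}(\theta)^{1-\theta}\bigr)$ then yields an effective constant depending only on $\theta$, as required.
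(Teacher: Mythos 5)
Your argument is correct and rests on the same mechanism as the paper's own proof: near $a \sim n$ the map $a \mapsto a^{\theta}$ moves in steps of size $O(n^{\theta-1})$ by the mean value theorem while sweeping an interval of length at least $1$, so some $a^{\theta}$ lands within $O(n^{\theta-1})$ of $\alpha$ modulo one. The paper packages this by choosing a real $r$ just below $n$ with $\{r^{\theta}\}=\{\alpha\}$ and taking $\lceil r\rceil$, whereas you bracket $\alpha+m$ between consecutive integer powers via a discrete intermediate-value argument --- a presentational difference only.
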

\begin{proof}
Let $r_{\theta}(\alpha,n)$ to be the unique real number in the interval
\begin{align*}
\left[(\lfloor n^{\theta} -1 \rfloor)^{\frac{1}{\theta}}, \lfloor n^{\theta}\rfloor^{\frac{1}{\theta}} \right),
\end{align*}
so that $\{r_{\theta}(\alpha,n)^{\theta}\} = \{\alpha\}$, and set $q_{\theta}(\alpha,n) = \lceil r_{\theta}(\alpha,n) \rceil$. Note that by the mean value theorem, we have
\begin{align*}
0 \leq q_{\theta}(\alpha,n)^{\theta} - r_{\theta}(\alpha,n)^{\theta} \leq C_{\theta}\cdot n^{\theta - 1},
\end{align*}
for some constant $C_{\theta} > 0$.
\end{proof}
The equality $\gamma^{*}(2,d)= 3/2-\frac{1}{d}$ follows as a corollary of a more general theorem.
\begin{cor}
\label{cortwotheta}
Let $\theta \in (0,1)\cup (1,3/2)$ and $\alpha \in \mathbb{R}$, there exists an effective constant $C_{\theta}>0$ so that
\begin{align*}
\min_{1 \leq u,v \leq n}\|u^{\theta} + v^{\theta} - \alpha\| \leq C_{\theta}\cdot n^{\theta - 3/2}.
\end{align*}
\end{cor}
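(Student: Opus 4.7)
I would deduce Corollary \ref{cortwotheta} from a gap estimate on the set $S := \{u^\theta + v^\theta : 1 \leq u, v \leq n\}$, namely that $S$ meets every sub-interval of length $C_\theta n^{\theta - 3/2}$ inside some real interval of length at least $1$. The Corollary then follows immediately: for any $\alpha \in \R$, pick an integer $k$ so that $\alpha + k$ lies in the covered interval, take a pair $(u, v)$ with $|u^\theta + v^\theta - \alpha - k| \leq C_\theta n^{\theta - 3/2}$, and conclude $\|u^\theta + v^\theta - \alpha\| \leq C_\theta n^{\theta - 3/2}$.

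\textbf{One-parameter construction.} For each integer $M \asymp n$, consider the family $g_M(u) := u^\theta + (M - u)^\theta$, and expand at $u = M/2$:
\[
g_M(u) - 2(M/2)^\theta = \tfrac{1}{2} g_M''(M/2)(u - M/2)^2 + O\bigl(|u - M/2|^4 M^{\theta - 4}\bigr),
\]
where $g_M''(M/2) = \theta(\theta - 1) 2^{3 - \theta} M^{\theta - 2}$. Restricting $u$ to the window $|u - M/2| \leq C_\theta M^{1/2}$ and applying the mean value theorem, the consecutive differences satisfy $|g_M(u + 1) - g_M(u)| \ll_\theta C_\theta M^{\theta - 3/2}$, while the image of the window is an interval of length $\asymp_\theta C_\theta^2 M^{\theta - 1}$.

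\textbf{Covering modulo $1$.} For $\theta \in (1, 3/2)$, $M^{\theta - 1} \to \infty$, so for $M \sim n$ with $n$ sufficiently large (depending only on $\theta$) the single window already has image of length exceeding $1$; reducing modulo $1$ then covers $[0, 1)$ with gaps $\leq C_\theta n^{\theta - 3/2}$. For $\theta \in (0, 1)$, the single window is too short, so I would vary $M$ over $[n/2, n]$. The center $2(M/2)^\theta$ advances by $\theta(M/2)^{\theta - 1}$ per unit $M$; choosing $C_\theta$ above the critical threshold $1/\sqrt{2(1 - \theta)}$ makes the window width $\asymp C_\theta^2 M^{\theta - 1}$ exceed this per-step shift, so consecutive windows overlap. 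Their union is dense in an interval of length $\gtrsim n \cdot n^{\theta - 1} = n^\theta \geq 1$, and again covers $[0, 1) \bmod 1$ with gaps $\leq C_\theta' n^{\theta - 3/2}$.

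\textbf{Main obstacle.} The delicate subrange is $\theta \in (1/2, 1)$: with the minimal window $h = M^{1/2}$, the per-step shift of window centers exceeds the window width, leaving real-line gaps of order $n^{\theta - 1}$ between consecutive windows — far above the target $n^{\theta - 3/2}$. The fix is to enlarge $h$ to $C_\theta M^{1/2}$ with $C_\theta$ past the critical threshold, which widens the windows enough to overlap at the cost of multiplying the consecutive-difference bound by $C_\theta$; since the constant in the Corollary is allowed to depend on $\theta$, this is absorbed. A secondary technical point is checking the constraints $1 \leq u, v \leq n$ throughout, which holds for $M \in [n/2, n]$ once $n$ exceeds some $\theta$-dependent bound, with smaller $n$ handled by enlarging $C_\theta$.
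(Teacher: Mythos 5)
Your proposal is correct, and its analytic engine is the same as the paper's: both arguments run over pairs with fixed sum $u+v=M$, expand $(M/2+t)^{\theta}+(M/2-t)^{\theta}$ quadratically about the symmetric point, use a window $|t|\ll M^{1/2}$, and exploit the resulting step size $\ll M^{\theta-3/2}$ (your threshold $1/\sqrt{2(1-\theta)}$ is the right constant for the overlap condition). The global route, however, differs. The paper first proves Theorem \ref{main5}, a Bambah--Chowla-type statement on the real line: every interval $[x,x+D_{\theta}x^{1-\frac{3}{2\theta}}]$ meets $\BigSquare{\theta}$. There a single adapted modulus suffices, namely $M=2s_{\theta}(x)$ with $s_{\theta}(x)=\lceil (x/2)^{1/\theta}\rceil$ (resp.\ $\lfloor (x/2)^{1/\theta}\rfloor$ for $\theta\in(1,2)$), because the rounding error $E_{\theta}(x)=x-2s_{\theta}(x)^{\theta}\ll x^{1-1/\theta}$ has the correct sign and is automatically within reach of a window of width $\asymp s_{\theta}(x)^{1/2}$; no variation of $M$ or overlap analysis is needed, even in your delicate range $\theta\in(1/2,1)$. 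The corollary is then deduced by replacing $\alpha$ with $\alpha^{*}\in[n^{\theta}/5,2n^{\theta}/5]$ having the same fractional part and checking that the resulting $u,v$ are at most $n$. Your version proves the modulo-one covering directly: one window for $\theta\in(1,3/2)$, overlapping windows over $M\in[n/2,n]$ for $\theta\in(0,1)$, which legitimately replaces the paper's adapted choice of $s_{\theta}(x)$. What each buys: the paper's detour yields the standalone gap theorem (including the case $\theta\geq 2$), of independent interest, and makes the corollary a two-line consequence; yours is self-contained for the corollary but proves less, and needs two small touch-ups in a write-up --- the ``image of the window is an interval of length $\asymp C_{\theta}^{2}M^{\theta-1}$'' should be read as the continuous image, with the integer points forming an $O(C_{\theta}n^{\theta-3/2})$-net of it, and $C_{\theta}$ must be taken strictly above the critical threshold so that the quartic Taylor remainder and the rounding at the window edges are absorbed for large $n$.
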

The above corollary emerges from an investigation of gaps between consecutive elements in the set 
\begin{align*}
\BigSquare{\theta} := \{u^{\theta}+v^{\theta}: \ u,v \in \mathbb{N}\cup\{0\}\}
\end{align*}
for $\theta > 0$. The Bambah-Chowla Theorem tells us that when $x \geq 1$ there is an element of $\BigSquare{2}$ in the interval \text{$[x, x+2\sqrt{2}x^{1/4}+1)$} \cite{Shiu, Chowla}. A similar argument tells us that there exists an effective constant $C_{\theta}>0$ so that there is an element of $\BigSquare{\theta}$ in the interval $[x,x+C_{\theta}\cdot x^{1-\frac{2}{\theta}+\frac{1}{\theta^2}}]$ for $\theta > 1$. We can improve over such an argument in the $1 < \theta < 2$ range.
\begin{thm}
\label{main5}
For $\theta > 0$ and $x \geq 1$ there exists a constant $D_{\theta} > 0$ so that there is an element of $\BigSquare{\theta}$ in the interval $[x,x+D_{\theta}\cdot x^{\Psi(\theta)}]$, where
\[ 
\Psi(\theta) = \begin{cases}
            1-\frac{3}{2\theta} & 0<\theta<1\\
            0 & \theta =1\\
            1-\frac{3}{2\theta} & 1 < \theta < 2\\
            1-\frac{2}{\theta}+\frac{1}{\theta^2} & \theta \geq 2.
            \end{cases}
\]
\end{thm}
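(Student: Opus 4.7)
The plan is to handle three ranges of $\theta$ separately, using a Bambah--Chowla-type first-order argument for $\theta \geq 2$, and a second-order \emph{symmetric pair} refinement for $0 < \theta < 2$, $\theta \neq 1$ (the case $\theta = 1$ being trivial since $\BigSquare{1} = \mathbb{N} \cup \{0\}$).

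For $\theta \geq 2$, set $u_0 = \lfloor x^{1/\theta} \rfloor$, $A = x - u_0^\theta$, and $v_0 = \lceil A^{1/\theta} \rceil$. The mean value theorem yields $A = O(x^{(\theta-1)/\theta})$, hence $v_0 = O(x^{(\theta-1)/\theta^2})$ and
\[
u_0^\theta + v_0^\theta - x = v_0^\theta - A \leq \theta v_0^{\theta-1} = O\bigl(x^{(\theta-1)^2/\theta^2}\bigr),
\]
with the implicit constants depending on $\theta$. This is $O(x^{\Psi(\theta)})$ as desired.

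For $0 < \theta < 2$ with $\theta \neq 1$, I would pick $m$ from $\{\lfloor (x/2)^{1/\theta} \rfloor, \lceil (x/2)^{1/\theta} \rceil\}$ so that $F(0) := 2m^\theta$ lies on the correct side of $x$: below if $\theta > 1$ and above if $\theta < 1$. In either case, the mean value theorem gives $|F(0) - x| = O(x^{(\theta-1)/\theta})$. For $j = 0, 1, 2, \ldots$, define $u_j = m + j$, $v_j = m - j$, and $F(j) = u_j^\theta + v_j^\theta$. Taylor expanding in $j/m$,
\[
F(j) = 2 m^\theta + \theta(\theta-1) m^{\theta-2} j^2 + O\bigl(m^{\theta-4} j^4\bigr),
\]
with $\theta(\theta-1)$ carrying the sign required to drive $F(j)$ across $x$ as $j$ grows. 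Choosing $J = \lceil C_\theta x^{1/(2\theta)} \rceil$ with $C_\theta$ sufficiently large ensures that $\{F(0), \ldots, F(J)\}$ brackets $x$, while consecutive increments satisfy $|F(j+1) - F(j)| = O(m^{\theta-2} J) = O(x^{1 - 3/(2\theta)}) = O(x^{\Psi(\theta)})$ for $j \leq J$. Since $F$ is monotone on $[0, J]$ with increments $O(x^{\Psi(\theta)})$, some $j^* \leq J$ gives $F(j^*) \in [x, x + D_\theta x^{\Psi(\theta)}]$, and $u_{j^*}, v_{j^*} \in \mathbb{N} \cup \{0\}$ since $J \ll m$.

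The main technical difficulty is controlling the quartic Taylor remainder $O(m^{\theta-4} j^4)$. With $J \sim x^{1/(2\theta)}$ and $m \sim x^{1/\theta}$ this remainder is $O(x^{(\theta-2)/\theta})$, which is of strictly smaller order than $x^{1 - 3/(2\theta)}$ because $(\theta-2)/\theta - (1 - 3/(2\theta)) = -1/(2\theta) < 0$. Additionally, the implicit constants degenerate (via factors of $1/|\theta - 1|$) as $\theta \to 1^\pm$, which is absorbed into the $\theta$-dependence of $D_\theta$. Routine parity adjustments for $m$ and verification for small $x$ are straightforward and complete the proof.
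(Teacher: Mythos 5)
Your proposal is correct and follows essentially the same route as the paper: the trivial case $\theta=1$, a greedy two-step rounding argument for $\theta\geq 2$, and for $\theta\in(0,1)\cup(1,2)$ a symmetric pair $(m+j)^{\theta}+(m-j)^{\theta}$ around $m\approx(x/2)^{1/\theta}$ with the sign of the rounding chosen by the sign of $\theta-1$, a second-order expansion giving $j\ll x^{1/(2\theta)}$, and an increment bound $\ll x^{1-3/(2\theta)}$. The only (cosmetic) difference is that the paper solves $I_{\theta,x}(k_{\theta}(x))=x$ for a real $k_{\theta}(x)$ and then rounds it, whereas you walk over integer $j$ and use a discrete crossing argument; the underlying estimates are identical.
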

Steinerberger also establishes a lower bound on the gap between sums of square roots and integers. They show that if $\sum_{j=1}^{k}\sqrt{a_{j}}$ is not an integer then there exists an absolute constant $C>0$ such that
\begin{align*}
\left\|\sum_{j=1}^{k}\sqrt{a_{j}}\right\| \geq C\cdot n^{1/2-2^{k-1}}
\end{align*}
in \cite[p. 49]{Steinerberger}. Dubickas generalises this result for higher order roots in \cite[Theorem 1]{Dubickas}. They show that if $\sum_{j=1}^{k}\sqrt[d]{a_{j}}$ is not an integer then
\begin{align*}
\left\|\sum_{j=1}^{k}\sqrt[d]{a_{j}}\right\| > (2k+1)^{1-d^{k}}\cdot n^{1/d - d^{k-1}}.
\end{align*}
As a consequence of Theorem \ref{almost}, we show that for almost all $\theta>0$ (in the Lebesgue measure sense), if $\varepsilon>0$, there exists a constant $C_{\theta, \varepsilon}>0$ such that if $\sum_{j=1}^{k}a_{j}^{\theta}$ is not an integer then
\begin{align*}
\left\|\sum_{j=1}^{k}a_{j}^{\theta}\right\| \geq C_{\theta,\varepsilon}\cdot n^{-k-\varepsilon}.
\end{align*}
\begin{thm}
\label{almost}
Given $\rho(\cdot) \geq 0$ with $\sum_{m=1}^{\infty} \rho(m)/m < \infty$, and $k \in \mathbb{N}$, the number of solutions to  
\[
\left|\sum_{j=1}^{k} a_j^{\theta} - b\right| \leq \frac{\rho\left(\|(a_1, \dots, a_k)\|_{\infty}\right)}{\|(a_1, \dots, a_k)\|_{\infty}^{k}},  
\]  
in the variables $\left((a_{j})_{j=1}^{k},b\right) \in \mathbb{N}^{k+1}$ is finite for almost all $\theta>0$.
\end{thm}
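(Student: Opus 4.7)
The strategy is a Borel--Cantelli argument on the ``scale'' $m=\|(a_{1},\ldots,a_{k})\|_{\infty}$. Since it suffices to prove the statement for almost every $\theta$ in every bounded subinterval $[T_{1},T_{2}]\subset(0,\infty)$ (and then take a countable exhaustion such as $[1/n,n]$), I would fix such an interval and, for each $m\in\mathbb{N}$, let $E_{m}\subseteq[T_{1},T_{2}]$ denote the set of $\theta$ for which the inequality admits a solution $(a_{1},\ldots,a_{k},b)$ with $\|(a_{1},\ldots,a_{k})\|_{\infty}=m$. The goal is to show $\sum_{m}|E_{m}|<\infty$; the first Borel--Cantelli lemma then yields that for a.e.\ $\theta\in[T_{1},T_{2}]$, $\theta$ lies in only finitely many $E_{m}$. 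Combined with the trivial observation that for any fixed tuple and fixed $\theta$ there are only $O(\rho(m)+1)$ eligible $b$'s, this forces the total number of solutions to be finite.

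The key estimate is a per-tuple measure bound. Fix a tuple $(a_{1},\ldots,a_{k})$ with $m:=\max_{j}a_{j}\geq 2$, and set $g(\theta)=\sum_{j=1}^{k}a_{j}^{\theta}$. Then $g$ is strictly increasing on $[T_{1},T_{2}]$ with
\[
g'(\theta)=\sum_{j=1}^{k}a_{j}^{\theta}\log a_{j}\;\geq\;m^{\theta}\log m\;\geq\;\frac{g(\theta)\log m}{k},
\]
since $g(\theta)\leq km^{\theta}$. Writing $\delta=\rho(m)/m^{k}$, the set $\{\theta\in[T_{1},T_{2}]:|g(\theta)-b|\leq\delta\text{ for some }b\in\mathbb{N}\}$ is the $g$-preimage of the $\delta$-neighbourhood of $\mathbb{Z}$ inside $[g(T_{1}),g(T_{2})]$. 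Changing variable via $u=g(\theta)$, $d\theta=du/g'(\theta)\leq (k/(u\log m))\,du$, and summing over admissible integers $b\in[1,g(T_{2})]$ gives a contribution bounded by
\[
\sum_{b\leq g(T_{2})}\int_{b-\delta}^{b+\delta}\frac{k\,du}{u\log m}\;\ll\;\frac{k\delta\log g(T_{2})}{\log m}\;\ll_{k,T_{2}}\;\delta.
\]

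Since the number of tuples $(a_{1},\ldots,a_{k})\in\mathbb{N}^{k}$ with $\|(a_{1},\ldots,a_{k})\|_{\infty}=m$ is at most $km^{k-1}$, assembling the per-tuple bound yields $|E_{m}|\ll_{k,T_{2}}km^{k-1}\cdot\rho(m)/m^{k}=O_{k,T_{2}}(\rho(m)/m)$, and the hypothesis $\sum_{m}\rho(m)/m<\infty$ gives $\sum_{m\geq 2}|E_{m}|<\infty$. The contribution from $m=1$ is trivial: the only tuple is $(1,\ldots,1)$ and $\sum a_{j}^{\theta}=k$ constrains $b$ to at most finitely many integers. The passage from ``a.e.\ $\theta$ in $[T_{1},T_{2}]$'' to ``a.e.\ $\theta\in(0,\infty)$'' is a countable union of null sets.

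The main technical obstacle is the measure estimate per tuple. The naive bound $g'(\theta)\geq m^{T_{1}}\log m$, combined with the $\asymp km^{T_{2}}$ integers $b$ in the range of $g$, produces a spurious factor $m^{T_{2}-T_{1}}$ that destroys summability as soon as $T_{2}-T_{1}$ exceeds $1$. The essential trick is the change of variable $u=g(\theta)$: the harmonic-type weight $1/u$ precisely counterbalances the growth of the range of $g$, collapsing the sum to a $\log$ and leaving a clean $\delta=\rho(m)/m^{k}$ bound. This is what makes the Borel--Cantelli input summable uniformly on every bounded $\theta$-interval.
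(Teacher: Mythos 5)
Your proposal is correct and follows essentially the same route as the paper: a Borel--Cantelli argument over the scale $m=\|(a_1,\dots,a_k)\|_{\infty}$ on bounded $\theta$-intervals, with a per-tuple measure bound coming from the lower bound $g'(\theta)\geq g(\theta)\log m/k$ for $g(\theta)=\sum_j a_j^{\theta}$, a harmonic-type sum over the integers $b$ in the range of $g$ cancelling the $1/\log m$, a union bound over the $O(m^{k-1})$ tuples at scale $m$, and a countable exhaustion of $(0,\infty)$. The only difference is cosmetic: the paper performs the same measure estimate discretely (enumerating the values of $\theta$ with $f_{\theta}(\omega)\in\mathbb{Z}$ and bounding the width of the interval around each via an explicit mean-value-type inequality), whereas your change of variables $u=g(\theta)$ packages the identical mechanism more compactly.
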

In \cite[Theorem 5]{Dubickas}, it is stated that, assuming the abc-conjecture, the inequality $0 < \|n^{\theta}\| \leq n^{-1-\varepsilon}$ has finitely many solutions for $n \in \mathbb{N}$ when $\theta < 1$ is a fixed rational number. Theorem \ref{almost} extends this result, showing that the same holds for almost all $\theta > 0$.

The exceptional set of $\theta$ in Theorem \ref{almost} is related to a question posed by Dubickas. Specifically (with a change in terminology), they ask whether, for a given transcendental number $\tau$ and $\varepsilon > 0$, there exists a smallest $v(\tau) \in \mathbb{R}$ such that 
\begin{align*}
0<\|n^{\tau}\| \leq n^{v(\tau)-\varepsilon}
\end{align*}
has only finitely many solutions for $n \in \mathbb{N}$ \cite[p. 5]{Dubickas}. We prove that the answer to this question is ``no''.
\begin{thm}
\label{DubCounter}
Let $\Phi : \mathbb{N} \rightarrow \R_{>0}$ be a function. The collection of $\theta \in \R_{>0}$ so that there are infinitely many solutions to $0<\|n^{\theta}\| \leq \Phi(n)$ forms a dense uncountable set.
\end{thm}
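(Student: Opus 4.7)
\emph{Proof plan.} The strategy is a Cantor-type nested-interval construction carried out inside an arbitrary non-empty open interval $U \subset \R_{>0}$: I will produce an uncountable set $K \subset U$ such that every $\theta \in K$ has infinitely many $n \in \N$ with $0 < \|n^\theta\| \leq \Phi(n)$. Since $U$ is arbitrary, this simultaneously delivers density and uncountability.

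Because $\|\cdot\| \leq 1/2$ automatically, I may replace $\Phi$ by $\min(\Phi, 1/4)$ and assume $\Phi(n) \leq 1/4$ throughout. The key local step is as follows. Given a closed sub-interval $I = [a, b] \subset U$, choose $n$ so large that $n^b - n^a \geq 2$; then the image $n^{I} = [n^a, n^b]$ contains two consecutive positive integers, and for each such integer $m$ the first-order expansion $n^\theta \approx m + m \log n \cdot (\theta - \log m / \log n)$ shows that
\[
J_m := \bigl\{\theta \in I : |n^\theta - m| \leq \Phi(n)\bigr\}
\]
is a non-empty closed sub-interval of $I$ of length comparable to $\Phi(n)/(m \log n)$, centred at $\log m / \log n$. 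Since the centres of $J_m$ and $J_{m+1}$ are separated by $\log(1+1/m)/\log n \approx 1/(m \log n)$, the truncation $\Phi(n) \leq 1/4$ forces the two intervals to be disjoint.

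Using this, I build a binary tree of closed intervals $I_s \subset U$ indexed by finite binary strings $s$, together with a strictly increasing sequence $n_1 < n_2 < \cdots$, so that for $|s| = k$ the children $I_{s0}, I_{s1}$ are two disjoint $J_m$-type sub-intervals of $I_s$ associated to $n = n_{k+1}$; choosing $n_{k+1}$ sufficiently large always makes this possible. For each $\sigma \in \{0,1\}^\N$ the nested intersection $\bigcap_k I_{\sigma|_k}$ is non-empty by compactness, yielding $\theta_\sigma$ with $\|n_k^{\theta_\sigma}\| \leq \Phi(n_k)$ for every $k$; the map $\sigma \mapsto \theta_\sigma$ is injective by disjointness, so the resulting set is uncountable. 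Finally, discarding the countable set $\{\log m / \log n : m \geq 1,\ n \geq 2\}$ of exponents for which some $n^\theta$ is an integer preserves uncountability and enforces the strict inequality $\|n_k^\theta\| > 0$ for every $k$.

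The only mildly delicate point is maintaining disjointness of the pieces $J_m$, which is precisely what dictates the $\Phi \leq 1/4$ truncation; beyond that, the argument is a routine Cantor-set recipe, and because $n_{k+1}$ may be chosen arbitrarily large at each stage no constraint whatsoever on the decay rate of $\Phi$ enters the construction.
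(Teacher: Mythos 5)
Your proof is correct, but it follows a genuinely different route from the paper. You run a soft, topological Cantor scheme: inside an arbitrary open interval you pull back the $\Phi(n)$-neighbourhoods of two consecutive integers $m,m+1$ lying in $[n^a,n^b]$ under the strictly increasing map $\theta\mapsto n^\theta$, obtaining two disjoint closed children (disjoint because you truncated $\Phi$ at $1/4$, so the target windows around $m$ and $m+1$ cannot overlap), and then iterate, choosing $n_{k+1}$ huge at each stage so that no decay hypothesis on $\Phi$ is ever needed; uncountability comes from the binary branching and density from the arbitrariness of the starting interval, with the countable set $\{\log m/\log n\}$ removed at the end to force $\|n_k^\theta\|>0$. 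The paper instead gives an explicit arithmetic coding: it fixes $r,s$, encodes a nondecreasing integer sequence $(s_j)$ with $s_0=s$ into $\theta=\tfrac{r}{s}+\sum_{d\geq 1}\bigl(\prod_{k=0}^{d}s_k\bigr)^{-1}\in[r/s,r/(s-1)]$, and tests only the special integers $n=2^{\prod_{k=0}^{h}s_k}$, for which $n^\theta=2^{U(\mathbf{s},h)+\mathrm{tail}}$ with $U(\mathbf{s},h)$ an integer and the tail at most $1/(s_{h+1}-1)$, so choosing each $s_{h+1}$ large enough beats $\Phi$; density comes from the family of intervals $[r/s,r/(s-1)]$ and the same countable exceptional set is excised. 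The trade-off: your argument is more elementary and flexible (any sufficiently large $n$ works at each step, and density is immediate), while the paper's construction produces completely explicit exceptional $\theta$ and explicit witnesses $n$. One small point you should make explicit when writing yours up: each child $J_m$ has positive length (the preimage of $[m-\Phi(n),m+\Phi(n)]$ is a nondegenerate interval around $\log m/\log n$, and its intersection with $I=[a,b]$ still has positive length since $a<b$), which is what allows the recursion to continue; also, since there are only finitely many intervals at level $k$, a single $n_{k+1}$ can indeed be chosen to serve them all, as your statement of the construction tacitly assumes.
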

As a corollary, if we take $\Phi(n) = 2^{-n}$ in Theorem \ref{DubCounter} we discover the existence of uncountably many transcendental numbers $\tau > 0$ so that $0 < \|n^{\tau}\| \leq 2^{-n}$ has infinitely many solutions in $n \in \mathbb{N}$. These transcendental numbers qualify for exceptional values of $\theta$ in Theorem \ref{almost}.

It remains an open question whether there exists an analogue of the Duffin–Schaeffer theorem \cite{Duffin, Maynard} for Theorem \ref{almost}.
\begin{conj}
Given $\rho(\cdot) \geq 0$ with $\sum_{m=1}^{\infty} \rho(m)/m = \infty$, and $k \in \mathbb{N}$, the number of solutions to  
\[
\left|\sum_{j=1}^{k} a_j^{\theta} - b\right| \leq \frac{\rho\left(\|(a_1, \dots, a_k)\|_{\infty}\right)}{\|(a_1, \dots, a_k)\|_{\infty}^{k}},  
\]  
in the variables $\left((a_{j})_{j=1}^{k},b\right) \in \mathbb{N}^{k+1}$ is infinite for almost all $\theta>0$.
\end{conj}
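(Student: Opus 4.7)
My plan is to apply the Borel--Cantelli lemma to the events
\[
A_{a,b} := \{\theta > 0 : |f_a(\theta) - b| \leq \rho(M)/M^k\},
\]
indexed by $(a, b) \in \N^k \times \N$, where $M := \|a\|_\infty$ and $f_a(\theta) := \sum_{j=1}^k a_j^\theta$. For fixed $\theta$, each $a$ admits at most $2\rho(M)/M^k + 1$ integers $b$ satisfying the inequality, so the set of $\theta$ with infinitely many solutions $(a, b)$ equals $\limsup_a S_a$ with $S_a := \bigcup_b A_{a,b}$. I would fix an arbitrary bounded interval $[T_1, T_2] \subset [0, \infty)$, prove that $\sum_a |S_a \cap [T_1, T_2]| < \infty$, and conclude by Borel--Cantelli and a union over $[0, n]$, $n \in \N$.

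The crucial estimate is that for every $a$ with $M \geq 2$,
\[
|S_a \cap [T_1, T_2]| \leq C_k (T_2 - T_1 + 1)\,\rho(M)/M^k,
\]
for some $C_k$ depending only on $k$. Since $M^\theta \log M \leq f_a'(\theta) \leq k M^\theta \log M$ (lower bound from the term with $a_j = M$; upper from monotonicity of $a \mapsto a^\theta \log a$ on $[1, \infty)$), the derivative varies by at most a factor of $ek$ on any subinterval of length $\Delta := 1/\log M$, since $M^\Delta = e$. I would partition $[T_1, T_2]$ into at most $(T_2 - T_1)\log M + 1$ such subintervals. On a subinterval $I$ with left endpoint $\theta_0$, $f_a(I)$ is an interval of length $\leq ek M^{\theta_0}$ and hence meets $\lesssim_k M^{\theta_0}$ integers; for each such integer $b$ the preimage under $f_a$ of $[b - \rho(M)/M^k, b + \rho(M)/M^k]$ within $I$ has measure at most $(2\rho(M)/M^k)/(M^{\theta_0}\log M)$. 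Multiplying yields a contribution $\lesssim_k \rho(M)/(M^k\log M)$ per subinterval, and the displayed bound follows on summing over the $\lesssim (T_2 - T_1)\log M$ subintervals.

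Finally, the number of $a \in \N^k$ with $\|a\|_\infty = M$ is at most $kM^{k-1}$, so
\[
\sum_{a:\,\|a\|_\infty \geq 2} |S_a \cap [T_1, T_2]| \leq C_k'(T_2 - T_1 + 1)\sum_{M \geq 2} \rho(M)/M < \infty
\]
by hypothesis, while the single tuple $(1, \ldots, 1)$ gives $f_a \equiv k$ and contributes at most $2\lfloor \rho(1) \rfloor + 1$ solutions, which is finite. Borel--Cantelli then shows that for almost every $\theta \in [T_1, T_2]$ the set $\{a : \theta \in S_a\}$ is finite, so the total number of solutions $(a, b)$ is finite. The main obstacle is exactly the derivative-control step: a naive bound using $\inf_{[T_1, T_2]} f_a'$ would lose a factor $M^{T_2 - T_1}$ that destroys summability in $M$, and partitioning on the scale $1/\log M$ (so that $\sup f_a' \asymp_k \inf f_a'$ on each piece) is precisely what recovers the correct $M^{-k}$ decay per tuple.
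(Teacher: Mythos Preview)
You have proved the wrong statement. The conjecture concerns the \emph{divergence} case: the hypothesis is $\sum_{m=1}^\infty \rho(m)/m = \infty$, and the conclusion is that there are \emph{infinitely} many solutions for almost all $\theta$. Your argument instead establishes finiteness of the solution set, and at the key step you write ``$\sum_{M\ge 2}\rho(M)/M < \infty$ by hypothesis'', which is precisely the opposite of what is assumed here. What you have written is (a correct, and essentially equivalent to the paper's) proof of Theorem~\ref{almost}, the convergence half; the statement you were asked about is the companion divergence half, which the paper explicitly leaves open as a conjecture.

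To see why your method cannot simply be reversed: the first Borel--Cantelli lemma gives $\mu(\limsup S_a)=0$ from $\sum |S_a|<\infty$ with no further input, but the converse implication $\sum |S_a|=\infty \Rightarrow \mu(\limsup S_a)>0$ (let alone full measure) requires a quantitative quasi-independence estimate of the form $\mu(S_a\cap S_{a'}) \le C\,\mu(S_a)\mu(S_{a'})$ for most pairs $a,a'$. Nothing in your derivative-control argument addresses the overlaps $S_a\cap S_{a'}$; controlling these is exactly the substance of Duffin--Schaeffer type theorems, and the paper flags this conjecture as the analogue of that problem. Your partitioning on the scale $1/\log M$ is the right device for the measure bound on a single $S_a$, but the conjecture needs an entirely different second step that you have not attempted.
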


\section{Notation}
\label{Notation}
Let $\xi,d \in \mathbb{N} = \{1,2,\ldots\}$ with $d \geq 2$.
\begin{itemize}
\item $\zeta_{d}$ is the primitive root of unity $e^{\frac{2\pi i }{d}}$, dividing the $d-$th cyclotomic polynomial $\Psi_{d}$ of degree $\varphi(d)$, where $\varphi$ is the Euler totient function.
\item $p_{j}$ is the $j-$th prime, we let $p_{0} = 1$ and $p_{-1} = 0$.
\item $P_{\xi} = \{\prod_{j=1}^{\xi}p_{j}^{a_{j}}:a_{j}\in \{0,1,\ldots,d-1\}\},$ so that $P_{\xi}$ is the set of $d^{\xi}$ natural numbers dividing the product $p_{1}^{d-1}\ldots p_{\xi}^{d-1}$,
\item $P_{\xi}^{*} := P_{\xi}\setminus\{1\}$,
\item $S_{\xi} := \{\sum_{f \in P_{\xi}^{*}}c_{f}\sqrt[d]{f}: c_{f}\in \mathbb{Z}\}$,
\item $S_{\xi}^{*} = S_{\xi}\setminus \{0\}$.
\item If $x \in \mathbb{R}$ we denote $\lfloor x \rfloor$ , $\lceil x\rceil$ and $\{x\}$ to be the floor, ceiling and integer part of $x$, respectively.
\item Let $U$, $V$, and $W$ be real quantities dependent on parameters $\mathbf{P}$ from some set, along with (possibly) $n \in \mathbb{N}, x \in \mathbb{R}$ and $\alpha \in \mathbb{R}$. We write $U \ll V$, $V \gg U$ or $U = O(V)$ if there exists an effective constant $C > 0$, independent of $n,x$ and $\alpha$, such that $|U| \leq C\cdot V$ for all choices of $n,x$ and $\alpha$ (with other variables fixed). Further, we write $U = V+O(W)$ if $U-V \ll W$.
\item $\mu$ is the Lebesgue measure defined on subsets of $\mathbb{R}$.
\item If $\mathbf{x} = (x_{1},\ldots,x_{k}) \in \R^{k}$ we denote $\| \mathbf{x}\|_{\infty} = \max_{1 \leq j \leq k} |x_{j}|$.
\end{itemize}
\section{Proof of Theorem \ref{main}}
\begin{lem}
\label{Galois}
$\Q(\zeta_{d},\sqrt[d]{p_{1}},\ldots,\sqrt[d]{p_{\xi}}) / \mathbb{Q}$ is a Galois extension.
\end{lem}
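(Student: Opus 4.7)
The plan is to exhibit the field $K = \Q(\zeta_d, \sqrt[d]{p_1}, \ldots, \sqrt[d]{p_\xi})$ as the splitting field of an explicit polynomial over $\Q$. Since $\Q$ has characteristic zero, every algebraic extension is separable, so the Galois property reduces to normality, and normality of a finite extension is equivalent to being a splitting field.

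Concretely, I would work with
\[
f(x) = (x^d - 1)(x^d - p_1)(x^d - p_2)\cdots(x^d - p_\xi) \in \Q[x].
\]
First I would observe that each generator of $K$ is a root of $f$: $\zeta_d$ is a root of $x^d-1$, and each $\sqrt[d]{p_j}$ is a root of $x^d - p_j$. Hence $K$ is contained in the splitting field $L$ of $f$ over $\Q$. For the reverse inclusion, the roots of $x^d - 1$ are the powers $1, \zeta_d, \ldots, \zeta_d^{d-1}$, all of which lie in $K$, and the roots of $x^d - p_j$ are $\zeta_d^k \sqrt[d]{p_j}$ for $k = 0, 1, \ldots, d-1$, again all in $K$ since both $\zeta_d$ and $\sqrt[d]{p_j}$ belong to $K$. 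Thus $L \subseteq K$, and so $K = L$.

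Since $K$ is the splitting field over $\Q$ of a single polynomial in $\Q[x]$, the extension $K/\Q$ is normal. Combined with separability (automatic in characteristic zero), this gives that $K/\Q$ is Galois. There is no real obstacle here — the only thing to be careful about is writing down a polynomial whose splitting field is exactly $K$ (not strictly larger), which is handled by including the factor $x^d - 1$ so that $\zeta_d$ is already among the roots and no extra root of unity needs to be adjoined.
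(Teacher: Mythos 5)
Your proof is correct, and it takes a genuinely different (and more direct) route than the paper. You identify $K=\Q(\zeta_d,\sqrt[d]{p_1},\ldots,\sqrt[d]{p_\xi})$ as the splitting field over $\Q$ of the single polynomial $(x^d-1)(x^d-p_1)\cdots(x^d-p_\xi)$, check both inclusions between $K$ and that splitting field, and invoke the characterization ``finite splitting field $=$ normal'' together with automatic separability in characteristic zero; this is the standard textbook argument and it closes the question with no gaps. The paper instead proceeds by a tower argument: it shows each step $\Q(\zeta_d,\sqrt[d]{p_0},\ldots,\sqrt[d]{p_{t-1}})\subseteq\Q(\zeta_d,\sqrt[d]{p_0},\ldots,\sqrt[d]{p_t})$ is Galois (the minimal polynomial of $\sqrt[d]{p_t}$ divides $x^d-p_t$, which splits there), then extends automorphisms up the tower to get $|\mathrm{Aut}(K/\Q)|\geq\prod_t[\text{step degrees}]=[K:\Q]$, forcing equality and hence the Galois property. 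Your approach is shorter and avoids the slightly delicate step of ``constructing automorphisms in a natural way'' that the paper leaves informal; the paper's approach has the side benefit of making visible the shape of the automorphisms ($\sigma(\sqrt[d]{p_t})=\zeta_d^{j_t}\sqrt[d]{p_t}$), which is what gets used later in the proof of Lemma~\ref{LowerBound} --- though that description also follows immediately from your splitting-field setup, since any automorphism must permute the roots of $x^d-p_t$. Either argument fully suffices for the statement.
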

\begin{proof}
Note that $\Q(\zeta_{d}) / \Q$ is Galois. Let $t \in \N$ and note that the minimal polynomial $P_{t}(x)$ for $\sqrt[d]{p_{t}}$ over $\mathbb{Q}(\zeta_{d},\sqrt[d]{p_{0}},\ldots,\sqrt[d]{p_{t-1}})$ divides $x^{d}-p_{t}$, a separable polynomial that splits in $\Q(\zeta_{d},\sqrt[d]{p_{0}},\ldots,\sqrt[d]{p_{t}})$. If $\mathbb{Q}(\zeta_{d},\sqrt[d]{p_{0}},\ldots,\sqrt[d]{p_{t-1}})\subseteq F\subseteq \mathbb{Q}(\zeta_{d},\sqrt[d]{p_{0}},\ldots,\sqrt[d]{p_{t}})$ is a field for which $P_{t}(x)$ splits, then it contains $\sqrt[d]{p_{t}}$ and hence $F= \mathbb{Q}(\zeta_{d},\sqrt[d]{p_{0}},\ldots,\sqrt[d]{p_{t}})$. Thus 
\begin{align*}
\Q(\zeta_{d},\sqrt[d]{p_{0}},\ldots,\sqrt[d]{p_{t}}) / \mathbb{Q}(\zeta_{d},\sqrt[d]{p_{0}},\ldots,\sqrt[d]{p_{t-1}})
\end{align*}
is Galois. We now observe that since we are dealing with finite extensions, by constructing automorphisms in a natural way one has
\begin{align*}
|\text{Aut}(\Q(\zeta_{d},&\sqrt[d]{p_{1}},\ldots,\sqrt[d]{p_{\xi}}) / \mathbb{Q})| \\
&\geq\prod_{t=0}^{\xi}\left|\text{Aut}(\Q(\zeta_{d},\sqrt[d]{p_{0}},\ldots,\sqrt[d]{p_{t}}) / \mathbb{Q}(\zeta_{d},\sqrt[d]{p_{0}},\ldots,\sqrt[d]{p_{t-1}}))\right|\\
&=\prod_{t=0}^{\xi}\left[\Q(\zeta_{d},\sqrt[d]{p_{0}},\ldots,\sqrt[d]{p_{t}}) : \mathbb{Q}(\zeta_{d},\sqrt[d]{p_{0}},\ldots,\sqrt[d]{p_{t-1}}))\right]\\
&=[\Q(\zeta_{d},\sqrt[d]{p_{1}},\ldots,\sqrt[d]{p_{\xi}}) : \mathbb{Q}].
\end{align*}
Hence the result follows.
\end{proof}

For an element $w \in S_{\xi}$ of the form $\sum_{f \in P_{\xi}^{*}}c_{f}\sqrt[d]{f}$ we denote
\begin{align*}
M(w) := \max_{f \in P_{\xi}^{*}}|c_{f}|.
\end{align*}

\begin{lem}
\label{LowerBound}
Let $w \in S_{\xi}^{*}$ with $M(w) = n$. We have
\begin{align*}
\|w\| \gg n^{-d^{\xi}+1}.
\end{align*}
\end{lem}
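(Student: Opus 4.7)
The plan is to apply a Liouville-type estimate, using that $w$ is a nonzero algebraic integer in a number field of bounded degree. I would work in $K := \mathbb{Q}(\sqrt[d]{p_1},\ldots,\sqrt[d]{p_\xi})$, which has degree $d^\xi$ over $\mathbb{Q}$ with $\mathbb{Q}$-basis $\{\sqrt[d]{f} : f \in P_\xi\}$; the degree statement follows from a tower argument built on Lemma \ref{Galois}. Each $\sqrt[d]{f}$ is an algebraic integer (a root of $x^d - f$), so $w \in S_\xi^*$ is itself a nonzero algebraic integer in $K$. Since the basis expansion of $w$ has zero coefficient at $\sqrt[d]{1} = 1$ but $w \neq 0$, $w \notin \mathbb{Q}$; in particular $w$ is not an integer.

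Next I would let $k \in \mathbb{Z}$ be the nearest integer to $w$, so that $\beta := w - k$ is a nonzero algebraic integer in $K$ with $\|w\| = |\beta|$. For each of the $d^\xi$ embeddings $\sigma : K \hookrightarrow \overline{\mathbb{Q}}$, the image $\sigma(\sqrt[d]{f})$ is a $d$-th root of $f$ and hence has absolute value $f^{1/d}$; this gives
\[
|\sigma(w)| \leq M(w) \sum_{f \in P_\xi^*} f^{1/d} \ll n,
\]
with an implied constant depending only on $\xi$ and $d$. Combined with $|k| \leq |w|+1 \ll n$, each $|\sigma(\beta)| \ll n$. The norm
\[
\Nm_{K/\mathbb{Q}}(\beta) = \prod_{\sigma} \sigma(\beta) \in \mathbb{Z}\setminus\{0\}
\]
therefore has absolute value at least $1$, so isolating the identity embedding yields
\[
1 \leq |\beta| \cdot \prod_{\sigma \neq \mathrm{id}} |\sigma(\beta)| \ll |\beta| \cdot n^{d^\xi - 1},
\]
which rearranges to $\|w\| = |\beta| \gg n^{-d^\xi + 1}$, as claimed.

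The main technical obstacle is confirming that $[K:\mathbb{Q}] = d^\xi$ (equivalently, that $\{\sqrt[d]{f} : f \in P_\xi\}$ is $\mathbb{Q}$-linearly independent), since this underpins both the irrationality of $w$ and the existence of $d^\xi$ genuine embeddings of $K$ to exploit in the norm bound. I would extract this from Lemma \ref{Galois} by a degree-tower comparison: once one knows the Galois closure $\mathbb{Q}(\zeta_d,\sqrt[d]{p_1},\ldots,\sqrt[d]{p_\xi})$ has the maximal possible degree $\varphi(d)\cdot d^\xi$, the subfield $K$ must have degree exactly $d^\xi$. Every remaining step—bounding conjugates, invoking integrality of the norm, and rearranging—is routine.
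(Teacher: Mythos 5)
Your overall strategy is the same as the paper's: both arguments are Liouville-type, pairing the bound $|\sigma(\beta)| \ll n$ for every conjugate of $\beta = w - k$ with the fact that a nonzero algebraic integer has a nonzero rational integer norm (the paper phrases this via the constant term of the minimal polynomial of $w^{*}=w-g_w$ and bounds the conjugates through automorphisms of the Galois closure, but this is only a cosmetic difference). The routine parts of your write-up — conjugate bounds $|\sigma(\sqrt[d]{f})| = f^{1/d}$, $|k|\ll n$, isolating the identity embedding — are fine.

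The genuine gap is in how you justify the one non-trivial input, namely that $\beta \neq 0$ (equivalently that $w\notin\Z$, which you deduce from $[K:\Q]=d^{\xi}$ and the basis property of $\{\sqrt[d]{f}: f\in P_{\xi}\}$). You propose to extract $[K:\Q]=d^{\xi}$ from Lemma \ref{Galois} by asserting that the closure $\Q(\zeta_d,\sqrt[d]{p_1},\ldots,\sqrt[d]{p_\xi})$ has the ``maximal possible degree'' $\varphi(d)\cdot d^{\xi}$. Lemma \ref{Galois} asserts no such thing — it only says the extension is Galois — and the asserted degree is in fact false in general: for example with $d=8$ and $p_1=2$ one has $\sqrt{2}=\zeta_8+\zeta_8^{-1}\in\Q(\zeta_8)$, so $[\Q(\zeta_8,2^{1/8}):\Q]=16<\varphi(8)\cdot 8=32$. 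So your degree-tower comparison does not go through as stated. The conclusion you need is still true, but it is exactly the content of Besicovitch's theorem on linear independence of fractional powers of integers (cited in the paper as \cite[Theorem 2]{Besico}), which is what the paper invokes to get $w^{*}\neq 0$ and the degree bound; note also that for the rest of your argument you only need $[K:\Q]\leq d^{\xi}$ (trivial) together with $w\notin\Z$, so citing Besicovitch for the non-vanishing repairs the proof without any further degree computation.
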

\begin{proof}
Let $g_{w}$ be an integer so that $\|w\| = |w-g_{w}|$ and put $w^{*}=w-g_{w}$. Observe the bound $g_{w} \ll n$. Since $w^{*}$ is an algebraic integer the minimal polynomial for $w^{*}$ over $\Q$ is monic. Let this polynomial be denoted as $Q_{w}(x)$. By \cite[Theorem 2]{Besico} we have $w^{*} \neq 0$ and thus $\text{deg}(Q_{w}) \geq 2$. The same theorem tells us that $\text{deg}(Q_{w}) \leq d^{\xi}$. Due to  Lemma \ref{Galois} if $v$ is a root of $Q_{w}$ then there exists $\sigma \in \text{Aut}(\Q(\zeta_{d},\sqrt[d]{p_{1}},\ldots,\sqrt[d]{p_{\xi}}) / \mathbb{Q})$ so that $v =\sigma(w^{*})$. We now observe that if $\sigma \in \text{Aut}(\Q(\zeta_{d},\sqrt[d]{p_{1}},\ldots,\sqrt[d]{p_{\xi}}) / \mathbb{Q})$ then there exists integers $j_{1},\ldots,j_{\xi}$ so that $\sigma(\sqrt[d]{p_{t}}) = \zeta_{d}^{j_{t}}\sqrt[d]{p_{t}}$. Writing $w^{*} = \sum_{f \in P_{\xi}^{*}}c_{f}\sqrt[d]{f} - g_{w}$ we observe that $\sigma(w^{*}) \ll n$. Let $w^{*}, w_{1},\ldots,w_{s-1}$ be all the roots of $Q_{w}$. We have
\begin{align*}
\left|w^{*}w_{1}\ldots w_{s-1}\right| = |Q_{w}(0)| \geq 1,
\end{align*}
and thus $|w^{*}| \gg n^{-s+1} \gg n^{-d^{\xi}+1}$.
\end{proof}

\begin{lem}
\label{UpperBound}
For every $n \in \mathbb{N}$ there exists $w \in S_{\xi}^{*}$ with $M(w) \leq n$ so that
\begin{align*}
\|w\|\leq n^{-d^{\xi}+1}.
\end{align*}
\end{lem}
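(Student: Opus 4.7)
The plan is a Dirichlet-style pigeonhole argument on short integer combinations of the generating $d$-th roots, designed to match (up to constants) the lower bound just proved in Lemma \ref{LowerBound}. The starting observation is that $|P_\xi^*| = d^\xi - 1$, so the cube $\{0, 1, \ldots, n\}^{d^\xi - 1}$ of coefficient vectors yields $(n+1)^{d^\xi - 1}$ formal sums $w = \sum_{f \in P_\xi^*} c_f \sqrt[d]{f}$, each automatically satisfying $M(w) \leq n$.

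The first step is to check that distinct coefficient tuples give distinct real numbers. This is the $\mathbb{Q}$-linear independence of $\{\sqrt[d]{f} : f \in P_\xi\}$, which is already contained in Lemma \ref{Galois}: the extension $\mathbb{Q}(\zeta_d, \sqrt[d]{p_1}, \ldots, \sqrt[d]{p_\xi})/\mathbb{Q}$ is Galois with degree attaining the maximum $\varphi(d)\cdot d^\xi$, and the $d^\xi$ elements of $\{\sqrt[d]{f} : f \in P_\xi\}$ are $\mathbb{Q}$-linearly independent there. Hence the family of formal sums above consists of $(n+1)^{d^\xi - 1}$ distinct real numbers.

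The second step is the box partition: divide $[0,1)$ into $M := n^{d^\xi - 1}$ half-open subintervals of length $1/M = n^{-d^\xi + 1}$. Since $(n+1)^{d^\xi - 1} > M$, the pigeonhole principle produces two distinct sums $w_1, w_2$ whose fractional parts fall in the same subinterval, and then $w := w_1 - w_2$ lies in $S_\xi$ with $M(w) \leq n$ and
\[
\|w\| \;\leq\; |\{w_1\} - \{w_2\}| \;<\; n^{-d^\xi + 1}.
\]

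The only point that needs attention is showing $w \neq 0$, so that $w$ actually lies in $S_\xi^*$; this is immediate from the linear independence used in the first step, since $w_1 \neq w_2$ forces some coefficient difference $c_f^{(1)} - c_f^{(2)}$ to be non-zero, and such a non-trivial integer combination of $\{\sqrt[d]{f} : f \in P_\xi^*\}$ cannot vanish. I do not anticipate any real obstacle here — the argument is a textbook Dirichlet box construction — and its output exactly mirrors Lemma \ref{LowerBound}, confirming that the two bounds are sharp up to constants.
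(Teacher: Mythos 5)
Your proof is correct and follows essentially the same Dirichlet pigeonhole argument as the paper: the paper forms the set $A_n$ of sums with coefficients in $\{1,\ldots,n\}$, cites Besicovitch's theorem to conclude $\#A_n = n^{d^\xi-1}$, and applies the Dirichlet box principle on $\mathbb{R}/\mathbb{Z}$, exactly as you do with the slight variant of allowing coefficients in $\{0,\ldots,n\}$. One small imprecision: the $\mathbb{Q}$-linear independence of $\{\sqrt[d]{f}: f \in P_\xi\}$ is the content of Besicovitch's theorem (cited in the paper as \cite[Theorem~2]{Besico}), not of Lemma~\ref{Galois}, which only establishes that the extension is Galois and does not by itself pin down its degree as $\varphi(d)\cdot d^\xi$.
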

\begin{proof}
Let
\begin{align*}
A_{n} = \left\{\sum_{f \in P_{\xi}^{*}}d_{f}\sqrt[d]{f}: d_{f}\in \{1,\ldots,n\}\right\}.
\end{align*}
By Lemma \cite[Theorem 2]{Besico} we have $\# A_{n} = n^{d^{\xi}-1}$. After applying the Dirichlet principle to the unit torus $\mathbb{R}/\mathbb{Z}$, the proposition now follows.
\end{proof}

\begin{lem}
\label{AlgorithmicIdea}
 Whenever $\alpha \in [0,1)$ and $n \in \mathbb{N}$, there exists $w_{n} \in S_{\xi}$ satisfying $M(w_{n}) \leq n$ with
\begin{align*}
0 \leq \alpha - \{w_{n}\} \ll n^{-d^{\xi}+1}.
\end{align*}
\end{lem}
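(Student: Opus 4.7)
My plan is to establish the lemma as a two-stage refinement: first produce a two-sided approximation $w^{\dagger}\in S_{\xi}$ with $M(w^{\dagger})\ll n$ and $\|\alpha-w^{\dagger}\|\ll n^{-d^{\xi}+1}$, and then convert it to a one-sided (below $\alpha$) approximation by subtracting a bounded number of copies of a small shift element produced by Lemma \ref{UpperBound}.

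For the first stage, I would apply a pigeonhole argument in the style of Lemma \ref{UpperBound} to the set $A_{\lfloor n/2\rfloor}\cup\{\alpha\}$ of $N+1$ distinct fractional parts in $[0,1)$, where $N=\lfloor n/2\rfloor^{d^{\xi}-1}$. Partitioning $[0,1)$ into $N$ intervals of length $1/N$ and invoking pigeonhole yields either (favourable) an element $a\in A_{\lfloor n/2\rfloor}$ with $|\alpha-\{a\}|\leq 1/N\ll n^{-d^{\xi}+1}$, or (unfavourable) a Lemma \ref{UpperBound}-style close pair from $A_{\lfloor n/2\rfloor}$. In the unfavourable case I would bootstrap by augmenting the set with a shift $A_{\lfloor n/2\rfloor}+w'$, where $w'\in S_{\xi}^{*}$ is the small element the close pair produces; since $M(a+w')\leq M(a)+M(w')\leq n$, this enlarged set still lies within the allowed $M$-budget, and reapplying pigeonhole on $A_{\lfloor n/2\rfloor}\cup(A_{\lfloor n/2\rfloor}+w')\cup\{\alpha\}$ (iterated if necessary) eventually forces $\alpha$ to share an interval with some element, delivering $w^{\dagger}$.

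For the second stage, if $\{w^{\dagger}\}\leq\alpha$ I simply set $w_{n}=w^{\dagger}$. Otherwise $\{w^{\dagger}\}-\alpha\leq C_{1}n^{-d^{\xi}+1}$. Lemma \ref{UpperBound} supplies $w^{*}\in S_{\xi}^{*}$ with $M(w^{*})\leq\lfloor n/2\rfloor$ and $\|w^{*}\|\leq\lfloor n/2\rfloor^{-d^{\xi}+1}$; after replacing $w^{*}$ by $-w^{*}$ if necessary, $\{w^{*}\}\in(0,\lfloor n/2\rfloor^{-d^{\xi}+1}]$. Crucially, Lemma \ref{LowerBound} gives the matching lower bound $\{w^{*}\}\gg n^{-d^{\xi}+1}$, so the ratio $(\{w^{\dagger}\}-\alpha)/\{w^{*}\}$ is bounded by an absolute constant $C_{d,\xi}$. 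I would take $w_{n}=w^{\dagger}-kw^{*}$ with $k=\lceil(\{w^{\dagger}\}-\alpha)/\{w^{*}\}\rceil=O(1)$, giving $M(w_{n})\leq(1+k)\lfloor n/2\rfloor=O(n)$ and
\[
0\leq\alpha-\{w_{n}\}=k\{w^{*}\}-(\{w^{\dagger}\}-\alpha)\leq k\{w^{*}\}\ll n^{-d^{\xi}+1}.
\]
Since the bounds are only required up to implicit constants, an initial rescaling $n\mapsto n/C_{d,\xi}$ absorbs any multiplicative loss into the constants.

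The main obstacle I anticipate is the unfavourable case in the first stage: naive pigeonhole does not automatically place $\alpha$ in a populated interval, so the bootstrapping-by-translation must be set up carefully enough that the $M$-budget $n$ is never exceeded across the iterations. I expect this to reduce to a finite and effective number of pigeonhole rounds, each using the previously produced small element as the new shift; alternatively, an inhomogeneous Minkowski argument on the rank-$d^{\xi}$ lattice spanned by the vectors $(e_{f},\sqrt[d]{f})_{f\in P_{\xi}^{*}}$ together with $(0,1)$ resolves the two-sided step in one shot, at the cost of importing lattice-point machinery not used elsewhere in the paper.
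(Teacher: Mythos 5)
There is a genuine gap, and it sits exactly at the heart of the lemma: the construction of the two-sided approximant $w^{\dagger}$ in your first stage. Pigeonhole applied to $A_{\lfloor n/2\rfloor}\cup\{\alpha\}$ only guarantees that \emph{some} interval of length $1/N$ contains two points; in the unfavourable case it tells you nothing about the interval containing $\alpha$, and no amount of iterating the trick of adjoining a single translate $A_{\lfloor n/2\rfloor}+w'$ forces a point into that interval. Concretely, the fractional parts of $A_{\lfloor n/2\rfloor}$ are merely $N$ distinct points in $[0,1)$; the gap around $\alpha$ could be much larger than $1/N$, and since $\|w'\|\leq 1/N$, shifting by one copy of $w'$ (or by any bounded number of newly found small elements) cannot bridge it. The only direct way to bridge an arbitrary gap with multiples of $w'$ is to use up to $\asymp 1/\{w'\}\asymp n^{d^{\xi}-1}$ copies, which destroys the $M$-budget: $M(kw')$ would be of order $n^{d^{\xi}}$ rather than $n$. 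So "reapplying pigeonhole \dots eventually forces $\alpha$ to share an interval with some element" is not a valid step, and your stage 2, while sound in spirit, only corrects an error that is already of size $O(n^{-d^{\xi}+1})$; it presupposes the inhomogeneous approximation it was supposed to help produce.

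The missing idea — and what the paper actually does — is to work at \emph{all dyadic scales at once}: for each $j$ it picks $x_{j}\in S_{\xi}^{*}$ with $M(x_{j})\leq 2^{j}$ and, crucially, the two-sided control $A_{\xi}(2^{j})^{-d^{\xi}+1}\leq\{x_{j}\}\leq(2^{j})^{-d^{\xi}+1}$ coming from Lemma \ref{UpperBound} together with Lemma \ref{LowerBound}, and then runs a greedy algorithm: subtract from the current residual $\alpha_{h}$ as many copies of $\{x_{h+1}\}$ as possible while staying nonnegative. The lower bound on $\{x_{h+1}\}$ relative to the upper bound on $\alpha_{h}$ forces the multiplicities $y_{h}$ to be bounded by a constant $\beta$, so after $t$ steps the residual is at most $(2^{t})^{-d^{\xi}+1}$ while $M\bigl(\sum_{h\leq t}y_{h}x_{h}\bigr)\leq\beta\,2^{t+1}$, and choosing $t\approx\log_{2}(n/2\beta)$ gives the lemma. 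Your stage 2 is essentially the last step of this scheme, but a single scale cannot do the whole job. Your parenthetical alternative — an inhomogeneous transference/Minkowski-type argument on the lattice generated by $(e_{f},\sqrt[d]{f})_{f\in P_{\xi}^{*}}$ and $(0,1)$, using Lemma \ref{LowerBound} as the homogeneous lower bound — could indeed replace the greedy construction, but as written it is only a one-line remark, not a proof, so the argument as submitted does not establish the statement.
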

\begin{proof}
Let $j \geq 1$ be an integer, using Lemma \ref{UpperBound} and multiplication by $-1$ if necessary we can find an element $x_{j} \in S_{\xi}^{*}$ which satisfies $M(x_{j}) \leq 2^{j}$ and 
\begin{align*}
0<\{x_{j}\}\leq \frac{1}{(2^{j})^{d^{\xi}-1}}.
\end{align*}
By Lemma \ref{LowerBound}, we may strengthen the above inequality to
\begin{align*}
\frac{A_{\xi}}{(2^{j})^{d^{\xi}-1}}\leq \{x_{j}\}\leq \frac{1}{(2^{j})^{d^{\xi}-1}}.
\end{align*}
for some $A_{\xi} > 0$.
Form the sequences $(y_{h})_{h=1}^{\infty}$ and $(\alpha_{h})_{h=1}^{\infty}$ inductively as follows; choose $y_{1}$ to be the largest non-negative integer so that 
\begin{align*}
\alpha - y_{1}\{x_{1}\} \geq 0 
\end{align*}
and set $\alpha_{1} = \alpha -y_{1}\{x_{1}\}$. For convenience, we estimate that
\begin{align*}
y_{1} \leq \beta,
\end{align*}
where
\begin{align*}
\beta = \left\lceil \frac{2^{d^{\xi}-1}}{A_{\xi}}\right\rceil.
\end{align*}
Furthermore, we estimate that 
\begin{align*}
0\leq \alpha_{1} \leq \frac{1}{2^{d^{\xi}-1}}.
\end{align*}
Let $y_{h+1}$ be the largest non-negative integer so that 
\begin{align*}
\alpha_{h}-y_{h+1}\{x_{h+1}\} \geq 0.
\end{align*}
Then we set
\begin{align*}
\alpha_{h+1} = \alpha_{h}-y_{h+1}\{x_{h+1}\}.
\end{align*}
In this setting, we observe that
\begin{align*}
0\leq \alpha_{h} \leq \frac{1}{(2^{h})^{d^{\xi}-1}}.
\end{align*}
With $\alpha_{0} = \alpha$ and $h \in 0,1,2,\ldots$ we have
\begin{align*}
0 \leq \alpha_{h+1} = \alpha_{h}-y_{h+1}\{x_{h+1}\} \leq \frac{1}{(2^{h})^{d^{\xi}-1}}-y_{h+1}\frac{A_{\xi}}{(2^{h+1})^{d^{\xi}-1}},
\end{align*}
thus
\begin{align*}
y_{h+1} \leq \beta.
\end{align*}
Now we observe that 
\begin{align}
\label{MajorInequality}
0\leq \alpha_{t} = \alpha - \sum_{h=1}^{t}y_{h}\{x_{h}\} \leq \frac{1}{(2^{t})^{d^{\xi}-1}}.
\end{align}
Put $\omega(t) = \sum_{h=1}^{t}y_{h}x_{h}$. Since $\sum_{h=1}^{t}y_{h}\{x_{h}\} \in [0,1)$ it holds true that
\begin{align*}
\sum_{h=1}^{t}y_{h}\{x_{h}\} = \left\{\sum_{h=1}^{t}y_{h}\{x_{h}\}\right\} = \left\{\sum_{h=1}^{t}y_{h}x_{h} \right\} = \left\{\omega(t)\right\}.
\end{align*}
Thus, by \eqref{MajorInequality} we have
\begin{align}
\label{MajorInequality2}
0 \leq \alpha -\{\omega(t)\} \leq \frac{1}{(2^t)^{d^{\xi}-1}}.
\end{align}
Here we bound 
\begin{align}
\label{MajorInequality3}
M(\omega(t)) = M\left(\sum_{h=1}^{t}y_{h}x_{h}\right) \leq \sum_{h=1}^{t}\beta\cdot 2^{h} \leq \beta \cdot 2^{t+1}.
\end{align}
If $n \geq 4\beta$ we can show that $\omega\left(\left\lfloor\log_{2}(\frac{n}{2\beta})\right\rfloor\right)$ satisfies
\begin{align*}
M\left(\omega\left(\left\lfloor\log_{2}\left(\frac{n}{2\beta}\right)\right\rfloor\right)\right) \leq n
\end{align*}
by \eqref{MajorInequality3}, and
\begin{align*}
0 \leq \alpha - \left\{\omega\left(\left\lfloor\log_{2}(\frac{n}{2\beta})\right\rfloor\right)\right\} &\leq \left(2^{\left\lfloor\log_{2}(\frac{n}{2\beta})\right\rfloor}\right)^{-d^{\xi}+1}\\
&\hspace{1cm}\ll \left(2^{\log_{2}(\frac{n}{2\beta})}\right)^{-d^{\xi}+1} \ll n^{-d^{\xi}+1}
\end{align*}
by \eqref{MajorInequality2}.
\end{proof}

\begin{lem}
\label{LastOne}
For every $n \in \mathbb{N}$ and $\alpha \in \R$ there exist integers \[0<c_{f}\leq n,\] so that
\begin{align*}
\left\|\sum_{f \in P_{\xi}^{*}}c_{f}\sqrt[d]{f} -\alpha\right\| \ll n^{-d^{\xi}+1}.
\end{align*}
\end{lem}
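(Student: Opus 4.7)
The plan is to reduce \textbf{Lemma \ref{LastOne}} to \textbf{Lemma \ref{AlgorithmicIdea}} by shifting all the coefficients by a large common integer to force them into the positive range $\{1,\dots,n\}$. The key observation is that adding a constant integer $N$ to every coefficient $c_f$ changes the sum $\sum_{f\in P_\xi^*} c_f \sqrt[d]{f}$ by the fixed real number $N\beta$, where $\beta := \sum_{f\in P_\xi^*} \sqrt[d]{f}$, and therefore only translates the target by a fixed real. So approximating $\alpha$ with positive-coefficient sums is the same as approximating the shifted target $\alpha - N\beta$ (modulo $1$) with arbitrary-sign coefficients, which is precisely what Lemma \ref{AlgorithmicIdea} delivers.

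Concretely, assume $n$ is large enough that $n' := \lfloor (n-1)/2\rfloor \geq 1$, and set $N := n'+1$. Apply Lemma \ref{AlgorithmicIdea} with the value $n'$ in place of $n$ and with the target $\{\alpha - N\beta\}\in[0,1)$ in place of $\alpha$. This produces $w = \sum_{f\in P_\xi^*} c_f'\sqrt[d]{f} \in S_\xi$ with $M(w)\le n'$ (so that $|c_f'|\le n'$ for every $f$) satisfying
\[
0 \le \{\alpha - N\beta\} - \{w\} \ll (n')^{-d^\xi + 1} \ll n^{-d^\xi+1}.
\]
Now define $c_f := c_f' + N$. Since $|c_f'|\le n'$ and $N = n'+1$, we have $c_f \in [1,\, 2n'+1] \subseteq [1,n]$, i.e.\ the coefficients $c_f$ are positive integers bounded by $n$, as required.

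It remains to check the approximation. By construction $\sum_{f} c_f\sqrt[d]{f} = w + N\beta$, and $w$ and $\alpha - N\beta$ differ from their fractional parts by integers, so
\[
\Bigl\|\sum_{f\in P_\xi^*} c_f\sqrt[d]{f} - \alpha\Bigr\| = \|w - (\alpha - N\beta)\| = \|\{w\} - \{\alpha - N\beta\}\| \ll n^{-d^\xi+1}.
\]
For the finitely many small $n$ for which this scheme is not available, one simply picks any admissible $c_f$'s and absorbs the resulting error into the implied constant. I do not anticipate any real obstacle: the only thing to get right is the bookkeeping on $N$ and $n'$ to ensure the shifted coefficients land in $\{1,\dots,n\}$, and the fact that the fixed translation by $N\beta$ costs nothing in the implicit constant.
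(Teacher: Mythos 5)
Your proposal is correct and is essentially the paper's own argument: the paper likewise shifts the target by a fixed multiple of $\sum_{f\in P_{\xi}^{*}}\sqrt[d]{f}$ (it uses $\lfloor n/2\rfloor$ as the shift and applies Lemma \ref{AlgorithmicIdea} with parameter $\lfloor n/3\rfloor$), then translates the coefficients back into the range $\{1,\dots,n\}$. Your version differs only in the bookkeeping constants ($N=n'+1$ with $n'=\lfloor (n-1)/2\rfloor$ instead of $\lfloor n/2\rfloor$ and $\lfloor n/3\rfloor$), which is immaterial.
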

\begin{proof}
We can assume that $n \geq 6$. Put 
\begin{align*}
\rho = \sum_{f \in P_{\xi}^{*}}\left\lfloor\frac{n}{2}\right\rfloor\sqrt{f}
\end{align*}
and $\alpha_{0} = \{\alpha -\rho\}$. By Lemma \ref{AlgorithmicIdea} there exists $\kappa \in S_{\xi}$ so that \text{ }\text{ } $M(\kappa) \leq \lfloor n/3\rfloor$ and 
\begin{align*}
0 \leq \alpha_{0} - \{\kappa\} \ll n^{-d^{\xi}+1}.
\end{align*}
In particular $\kappa+\rho$ is of the form $\sum_{f \in P_{\xi}^{*}}d_{f}\sqrt{f}$ where 
\begin{align*}
\left\lfloor\frac{n}{2}\right\rfloor - \left\lfloor\frac{n}{3}\right\rfloor\leq d_{f} \leq \left\lfloor\frac{n}{2}\right\rfloor+\left\lfloor\frac{n}{3}\right\rfloor,
\end{align*}
and
\begin{align*}
\|\kappa + \rho - \alpha\| \ll n^{-d^{\xi}+1}.
\end{align*}
\par \vspace{-\baselineskip} \qedhere
\end{proof}
We are finally in a position to prove Theorem \ref{main}. Let
\begin{align*}
\xi = \lfloor \log_{d}(k+1) \rfloor.
\end{align*}
We can assume that
\begin{align*}
n \geq 2^{d} \prod_{j=1}^{\xi}p_{j}^{d-1}.
\end{align*}
Using Lemma \ref{LastOne} we can find integers 
\begin{align*}
0<c_{f} \leq \left\lfloor\frac{\sqrt[d]{n}}{\sqrt[d]{p_{1}^{d-1}\ldots p_{\xi}^{d-1}}}\right\rfloor
\end{align*}
so that 
\begin{align*}
\left\|\sum_{f \in P_{\xi}^{*}}c_{f}\sqrt[d]{f} -\alpha\right\| \ll \left\lfloor\frac{\sqrt[d]{n}}{\sqrt[d]{p_{1}^{d-1}\ldots p_{\xi}^{d-1}}}\right\rfloor^{-d^{\xi}+1} \ll n^{\frac{-d^{\xi}+1}{d}}
\end{align*}
Let $1=b_{1}=\ldots=b_{k+1-d^{\xi}}$. Then
\begin{align*}
\left\|\sum_{f \in P_{\xi}^{*}}\sqrt[d]{c_{f}^d f} + \sum_{d=1}^{k+1-d^{\xi}}\sqrt{b_{d}} -\alpha\right\|\ll n^{\frac{-d^{\xi}+1}{d}},
\end{align*}
which concludes the proof of Theorem \ref{main}.
\section{Proof of Theorem \ref{main5} and Corollary \ref{cortwotheta}}
\hspace{-0.4cm}If $\theta = 1$ the proposition holds trivially. We consider the cases $\theta \in (0,1)\cup (1,2)$ and $\theta \geq 2$ separately.
\subsection{The case $\theta \in (0,1)\cup (1,2)$} \text{ }

\text{ }

 \hspace{-0.4cm}Given $x \geq 2$, for $\theta \in (0,1)$ we let $s_{\theta}(x) = \lceil(x/2)^{1/\theta}\rceil$, and for $\theta \in (1,2)$ we let $s_{\theta}(x) = \lfloor (x/2)^{1/\theta} \rfloor$. Let $E_{\theta}(x) = x-2\cdot s_{\theta}(x)^{\theta}$. We can easily verify the following lemma.
\begin{lem}
\label{Errorsignlem}
For $\theta \in (0,1)$ we have $E_{\theta}(x) \leq 0$ and for $\theta \in (1,2)$ we have $E_{\theta}(x) \geq 0$.
\end{lem}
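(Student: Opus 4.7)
The plan is essentially a one-line observation: since $t \mapsto t^{\theta}$ is monotone increasing on $[0,\infty)$ for every $\theta > 0$, both inequalities follow immediately from the defining bounds for the ceiling and the floor. The splitting at $\theta = 1$ is engineered precisely so that the rounding direction produces the correct sign of $E_{\theta}(x)$.

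Concretely, for $\theta \in (0,1)$ I would start from the definition $s_{\theta}(x) = \lceil (x/2)^{1/\theta} \rceil \geq (x/2)^{1/\theta}$, raise both sides to the power $\theta$ (legitimate because $t \mapsto t^{\theta}$ is increasing on the positive reals), and obtain $s_{\theta}(x)^{\theta} \geq x/2$. Multiplying by $2$ and rearranging gives $E_{\theta}(x) = x - 2 s_{\theta}(x)^{\theta} \leq 0$, as required.

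For $\theta \in (1,2)$ I would proceed symmetrically, starting from $s_{\theta}(x) = \lfloor (x/2)^{1/\theta} \rfloor \leq (x/2)^{1/\theta}$, again raising to the $\theta$-th power to get $s_{\theta}(x)^{\theta} \leq x/2$, and concluding $E_{\theta}(x) \geq 0$.

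There is no real obstacle here. The assumption $x \geq 2$ enters only to guarantee $(x/2)^{1/\theta} \geq 1$, so that $s_{\theta}(x)$ is a positive integer and $s_{\theta}(x)^{\theta}$ is unambiguously the intended positive real power; no other subtlety arises. The authors state that the verification is easy, and indeed the whole content of the lemma is this monotonicity of $t \mapsto t^{\theta}$ paired with the appropriate rounding.
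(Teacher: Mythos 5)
Your proof is correct and is the straightforward verification the paper alludes to (the paper gives no explicit proof, only the remark that the lemma is easily checked). The monotonicity of $t \mapsto t^{\theta}$ on $[0,\infty)$ together with the ceiling/floor bounds is exactly the intended argument.
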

\begin{lem}
\label{Errorsizelem} $E_{\theta}(x) \ll x^{1-1/\theta}$.
\end{lem}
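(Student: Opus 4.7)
The plan is to reduce the estimate $E_{\theta}(x) \ll x^{1-1/\theta}$ to a single application of the mean value theorem. Set $y := (x/2)^{1/\theta}$, so that $2y^{\theta} = x$ by construction, and observe that by the definition of $s_{\theta}(x)$ we have $|s_{\theta}(x) - y| \leq 1$: when $\theta \in (0,1)$, $s_{\theta}(x) = \lceil y \rceil$ while when $\theta \in (1,2)$, $s_{\theta}(x) = \lfloor y \rfloor$. Rewriting
\[
E_{\theta}(x) = x - 2 s_{\theta}(x)^{\theta} = 2\bigl(y^{\theta} - s_{\theta}(x)^{\theta}\bigr)
\]
then reduces the task to bounding $y^{\theta} - s_{\theta}(x)^{\theta}$.

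Next I would apply the mean value theorem to the function $t \mapsto t^{\theta}$ on the (possibly degenerate) interval with endpoints $y$ and $s_{\theta}(x)$, producing some $\xi$ between them with
\[
y^{\theta} - s_{\theta}(x)^{\theta} = \theta\, \xi^{\theta-1}\, (y - s_{\theta}(x)).
\]
Since $|y - s_{\theta}(x)| \leq 1$, the only remaining task is to estimate $\xi^{\theta-1}$. For $x \geq 2$ we have $y \asymp x^{1/\theta}$ and $\xi$ differs from $y$ by at most one, so $\xi \asymp x^{1/\theta}$. Because the sign of $\theta - 1$ determines whether $\xi^{\theta-1}$ is maximised at the lower or upper end of the interval, I would handle the two ranges $0<\theta<1$ and $1<\theta<2$ separately: in the first case $\theta-1<0$ and I use the lower bound $\xi \geq y$ (valid since $s_\theta(x) \geq y$), while in the second case $\theta-1>0$ and I use the upper bound $\xi \leq y$ (valid since $s_\theta(x) \leq y$). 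In either case this gives $\xi^{\theta-1} \ll x^{(\theta-1)/\theta} = x^{1 - 1/\theta}$, and combining with the factor $|y - s_{\theta}(x)| \leq 1$ delivers the claimed bound.

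There is no genuine obstacle here; the calculation is routine. The only small point of care is the case distinction above, to ensure that the correct extreme of $\xi$ is used. Small values of $x$ (where $y$ may be comparable to $1$, so the rounding error can dominate relative to $y$) can be absorbed into the implied constant by checking the bounded range $x \in [2,C]$ separately.
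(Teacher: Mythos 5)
Your proof is correct and follows essentially the same route as the paper: rewrite $E_{\theta}(x)=2\bigl((x/2)^{1/\theta}\bigr)^{\theta}-2s_{\theta}(x)^{\theta}$, apply the mean value theorem with the rounding error bounded by $1$, and bound the intermediate point's power by $x^{(\theta-1)/\theta}$. Your explicit case distinction on the sign of $\theta-1$ (using that $s_{\theta}(x)$ rounds up for $\theta\in(0,1)$ and down for $\theta\in(1,2)$) just makes precise the step the paper states in one line.
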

\begin{proof}
Write $E_{\theta}(x) = 2\left((x/2)^{1/\theta}\right)^{\theta} - 2s_{\theta}(x)^{\theta}$. By the mean value theorem, we have 
\begin{align*}
\left| E_{\theta}(x)\right| = 2\theta \cdot c_{\theta}(x)^{\theta - 1}\left|(x/2)^{1/\theta} -s_{\theta}(x) \right|,
\end{align*}
where $c_{\theta}(x)$ is a quantity in the bounded closed interval with endpoints being $(x/2)^{1/\theta}$ and $s_{\theta}(x)$.  Hence $E_{\theta}(x) \ll \left(x^{1/\theta}\right)^{\theta - 1} = x^{1-\frac{1}{\theta}}$.
\end{proof}
For $Q > 0$ let $H_{\theta,Q}(t) := (Q+t)^{\theta} + (Q-t)^{\theta}$.
\begin{lem}
\label{incdeclem}
As a function of $t$, $H_{\theta,Q}(t)$ is strictly decreasing on the interval $[0,Q]$ when $\theta \in (0,1)$, conversely it is strictly increasing on the same interval when $\theta \in (1,2)$.
\end{lem}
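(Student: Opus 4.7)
The plan is to prove Lemma \ref{incdeclem} by a direct differentiation argument, treating the two ranges of $\theta$ in parallel via the sign of $\theta-1$.

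First I would compute
\[
H'_{\theta,Q}(t) = \theta (Q+t)^{\theta-1} - \theta (Q-t)^{\theta-1},
\]
which is valid on $(0,Q)$ (and also at $t=Q$ when $\theta\in(1,2)$, but not when $\theta\in(0,1)$, where the second term blows up). Since for $t\in(0,Q)$ we have $Q+t>Q-t>0$, the sign of $H'_{\theta,Q}(t)$ is completely determined by the monotonicity of the power function $x\mapsto x^{\theta-1}$ on $(0,\infty)$.

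Next I would split into the two cases. If $\theta\in(0,1)$, then $\theta-1<0$, so $x\mapsto x^{\theta-1}$ is strictly decreasing, hence $(Q+t)^{\theta-1}<(Q-t)^{\theta-1}$ and $H'_{\theta,Q}(t)<0$ on $(0,Q)$. If $\theta\in(1,2)$, then $\theta-1\in(0,1)$, so $x\mapsto x^{\theta-1}$ is strictly increasing, hence $(Q+t)^{\theta-1}>(Q-t)^{\theta-1}$ and $H'_{\theta,Q}(t)>0$ on $(0,Q)$. Combined with continuity of $H_{\theta,Q}$ on the closed interval $[0,Q]$, this yields strict monotonicity on $[0,Q]$ in each case.

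There is no real obstacle here; the only minor technical point is the boundary behaviour at $t=Q$ when $\theta\in(0,1)$, where $H'_{\theta,Q}(t)\to-\infty$. I would handle this by noting that strict monotonicity on the half-open interval $[0,Q)$ (from the derivative sign) plus continuity of $H_{\theta,Q}$ at $t=Q$ forces strict monotonicity on the closed interval $[0,Q]$, since $H_{\theta,Q}(Q)=(2Q)^\theta$ is obtained as the limit of strictly decreasing values $H_{\theta,Q}(t)$ as $t\to Q^-$.
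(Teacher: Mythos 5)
Your argument is correct and is essentially identical to the paper's proof: differentiate $H_{\theta,Q}$ and read off the sign of $\theta\bigl((Q+t)^{\theta-1}-(Q-t)^{\theta-1}\bigr)$ from the monotonicity of $x\mapsto x^{\theta-1}$. The extra care you take at the endpoint $t=Q$ for $\theta\in(0,1)$ is a fine (if minor) refinement the paper leaves implicit.
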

\begin{proof}
Suppose $t \in (0, Q)$, we differentiate to obtain
\begin{align*}
H_{\theta,Q}'(t) = \theta\left((Q+t)^{\theta-1} - (Q-t)^{\theta-1}\right).
\end{align*}
The above quantity is negative when $\theta \in (0,1)$ and positive when $\theta \in (1,2)$, finishing the proof.
\end{proof}
Consider the function $I_{\theta,x}(t) := (s_{\theta}(x)+t)^{\theta}+(s_{\theta}(x)-t)^{\theta}$ over the domain $[0,s_{\theta}(x)]$.
\begin{lem}
\label{kthetasize1}
For all sufficiently large $x \geq N_{\theta}$ there exists a unique real number $k_{\theta}(x) \in [0, s_{\theta}(x)]$ which satisfies $I_{\theta ,x}(k_{\theta}(x)) = x$. Furthermore $k_{\theta}(x) = o(s_{\theta}(x))$.
\end{lem}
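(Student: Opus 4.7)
The plan is to read off existence and uniqueness from the monotonicity provided by Lemma \ref{incdeclem} (with $Q=s_\theta(x)$), checking at the two endpoints that $x$ lies strictly in the range of $I_{\theta,x}$, and then to obtain $k_\theta(x)=o(s_\theta(x))$ by rescaling the equation and appealing to the continuity of the inverse of $y\mapsto (1+y)^\theta+(1-y)^\theta$ at $y=0$.

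First I would evaluate the endpoints directly from the definitions:
\[
I_{\theta,x}(0)=2s_\theta(x)^\theta=x-E_\theta(x),\qquad I_{\theta,x}(s_\theta(x))=(2s_\theta(x))^\theta=2^{\theta-1}\bigl(x-E_\theta(x)\bigr).
\]
For $\theta\in(0,1)$, Lemma \ref{Errorsignlem} gives $E_\theta(x)\le 0$, so $I_{\theta,x}(0)\ge x$; on the other hand $2^{\theta-1}<1$ and $E_\theta(x)\ll x^{1-1/\theta}=o(x)$ by Lemma \ref{Errorsizelem}, so $I_{\theta,x}(s_\theta(x))<x$ once $x\ge N_\theta$ is sufficiently large. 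Lemma \ref{incdeclem} asserts that $I_{\theta,x}$ is strictly decreasing on $[0,s_\theta(x)]$, so the intermediate value theorem produces a unique $k_\theta(x)$ with $I_{\theta,x}(k_\theta(x))=x$. The case $\theta\in(1,2)$ is treated symmetrically: now $E_\theta(x)\ge 0$ and $2^{\theta-1}>1$, whence $I_{\theta,x}(0)\le x$ and $I_{\theta,x}(s_\theta(x))>x$ for all large $x$, and Lemma \ref{incdeclem} now gives strict increase.

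For the asymptotic $k_\theta(x)=o(s_\theta(x))$, I would normalise the variable by setting $y=t/s_\theta(x)\in[0,1]$ and dividing by $s_\theta(x)^\theta$; the defining equation $I_{\theta,x}(k_\theta(x))=x$ becomes
\[
f(y):=(1+y)^\theta+(1-y)^\theta=\frac{x}{s_\theta(x)^\theta}=2+\frac{E_\theta(x)}{s_\theta(x)^\theta}.
\]
Since $s_\theta(x)^\theta\sim x/2$ and $E_\theta(x)\ll x^{1-1/\theta}$, the error term satisfies $E_\theta(x)/s_\theta(x)^\theta\ll x^{-1/\theta}\to 0$. The function $f$ is continuous on $[0,1]$ with $f(0)=2$, and a second application of Lemma \ref{incdeclem} (with $Q=1$) shows $f$ is strictly monotonic there; therefore $f^{-1}$ is continuous at $2$, and evaluating gives $k_\theta(x)/s_\theta(x)=f^{-1}(2+o(1))\to 0$, as required.

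I do not expect a substantive obstacle here, since the two lemmas just cited isolate the only non-trivial analytic content; the main care is simply bookkeeping the sign reversals between the ranges $\theta\in(0,1)$ and $\theta\in(1,2)$, and verifying that the endpoint inequalities are strict for $x\ge N_\theta$ large enough.
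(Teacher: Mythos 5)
Your proof is correct and follows essentially the same route as the paper: both rely on the strict monotonicity of Lemma \ref{incdeclem}, the sign and size information of Lemmas \ref{Errorsignlem} and \ref{Errorsizelem}, and the intermediate value theorem, with uniqueness coming from monotonicity. The only cosmetic difference is that the paper obtains existence directly on $[0,\varepsilon\cdot s_{\theta}(x)]$ for arbitrary $\varepsilon$, so that $k_{\theta}(x)=o(s_{\theta}(x))$ comes for free, whereas you first locate $k_{\theta}(x)$ in $[0,s_{\theta}(x)]$ by checking the far endpoint and then extract the $o$-estimate by rescaling and using continuity of the inverse of $y\mapsto(1+y)^{\theta}+(1-y)^{\theta}$ at the value $2$ --- an equivalent quantification.
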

\begin{proof}
Let $\varepsilon \in (0,1)$. We evaluate
\begin{align*}
\left|I_{\theta,x}(\varepsilon \cdot s_{\theta}(x)) - I_{\theta,x}(0)\right| &= s_{\theta}(x)^{\theta}\left| (1+\varepsilon)^{\theta}+(1-\varepsilon)^{\theta}-2\right|\\
&= s_{\theta}(x)^{\theta}\left|H_{\theta,1}(\varepsilon)-2\right|\\
&\gg x.
\end{align*}
The above computation combined with Lemmas \ref{Errorsignlem}, \ref{Errorsizelem}, \ref{incdeclem} shows that for sufficiently large $x$ there exists $k_{\theta}(x) \in [0,\varepsilon \cdot s_{\theta}(x)]$ satisfying
\begin{align*}
I_{\theta,x}(k_{\theta}(x)) - I_{\theta,x}(0) = E_{\theta}(x).
\end{align*}
The uniqueness of $k_{\theta}(x)$ follows from Lemma \ref{incdeclem}.
\end{proof}
Consider the function $G_{\theta}(z):= (1+z)^{\theta} + (1-z)^{\theta}$ over the domain $[0,1]$.
\begin{lem}
\label{Glocbound}
For $\varepsilon \in (0,\frac{1}{2}]$ we have
\begin{align*}
\left|G_{\theta}(z)-2-\theta(\theta-1)z^2\right| \leq 64\varepsilon \cdot z^2,
\end{align*}
and
\begin{align*}
\left|G_{\theta}'(z)\right| \leq (2\theta\left|\theta-1\right| + 96\varepsilon)\cdot z
\end{align*}
in the interval $z \in [0,\varepsilon]$.
\end{lem}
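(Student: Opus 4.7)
My plan is a straightforward Taylor expansion of $G_\theta$ around $z=0$, combined with a uniform bound on the third derivative on the interval $[0,1/2]$.

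First I would compute
\[
G_\theta'(z) = \theta(1+z)^{\theta-1} - \theta(1-z)^{\theta-1}, \qquad G_\theta''(z) = \theta(\theta-1)\bigl[(1+z)^{\theta-2}+(1-z)^{\theta-2}\bigr],
\]
and
\[
G_\theta'''(z) = \theta(\theta-1)(\theta-2)\bigl[(1+z)^{\theta-3}-(1-z)^{\theta-3}\bigr].
\]
In particular $G_\theta(0)=2$, $G_\theta'(0)=0$ and $G_\theta''(0)=2\theta(\theta-1)$, so $2+\theta(\theta-1)z^{2}$ is precisely the second-order Taylor polynomial of $G_\theta$ at the origin. Applying Taylor's theorem with Lagrange remainder (at order three for $G_\theta$ itself, and at order two for its derivative) then reduces both desired inequalities to a uniform bound on $|G_\theta'''|$ on $[0,1/2]$.

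Next I would establish such a bound. Since $\theta\in(0,1)\cup(1,2)$, the factor $|\theta(\theta-1)(\theta-2)|$ is bounded by an absolute constant. For $z\in[0,1/2]$, the negativity of the exponent $\theta-3$ gives $(1+z)^{\theta-3}\leq 1$, whereas $(1-z)^{\theta-3}\leq 2^{3-\theta}\leq 8$. Combining these observations yields an absolute bound on $|G_\theta'''(z)|$ for all $z\in[0,1/2]$ uniformly in $\theta$.

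Finally, plugging this bound into the two Taylor remainder expressions and using $z\leq\varepsilon$ to absorb one factor of $z$ into a factor of $\varepsilon$ delivers both inequalities with plenty of slack; the constants $64$ and $96$ quoted in the statement are deliberately generous. I do not expect any serious obstacle here: the argument is essentially a single application of Taylor's theorem, and the only estimate requiring a moment's thought is the easy bound $(1-z)^{\theta-3}\leq 8$.
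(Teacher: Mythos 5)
Your proposal is correct and follows essentially the same route as the paper: a third-order Taylor expansion of $G_{\theta}$ (and a second-order one for $G_{\theta}'$) at $z=0$ with Lagrange remainder, combined with a uniform bound on $|G_{\theta}'''|$ on $[0,\tfrac{1}{2}]$ that uses the contextual restriction $\theta\in(0,1)\cup(1,2)$. The only difference is the bookkeeping of the third-derivative bound (your estimate is in fact sharper than the paper's constant $384$), so the stated constants follow with room to spare.
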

\begin{proof}
We compute
\begin{align*}
G_{\theta}'(z) = \theta(1+z)^{\theta-1}-\theta(1-z)^{\theta-1},
\end{align*}
\begin{align*}
G_{\theta}''(z) = \theta(\theta - 1)(1+z)^{\theta-2}+\theta(\theta-1)(1-z)^{\theta-2},
\end{align*}
and
\begin{align*}
G_{\theta}'''(z) = \theta(\theta - 1)(\theta-2)(1+z)^{\theta-3}-\theta(\theta-1)(\theta-2)(1-z)^{\theta-3}.
\end{align*}
In the interval $z\in [0,1/2]$ we have the bound
\begin{align*}
|G_{\theta}'''(z)| \leq 2\cdot\theta(\theta + 1)(\theta+2)2^{3-\theta} \leq 384.
\end{align*}
By the extended mean value theorem, we know that
\begin{align*}
G_{\theta}(z) = G_{\theta}(0) + G_{\theta}'(0)z + \frac{G_{\theta}''(0)z^2}{2!} + \frac{G_{\theta}'''(c_{z}) z^3}{3!}
\end{align*}
for some $c_{z} \in [0,z]$, and thus we have
\begin{align*}
\left|G_{\theta}(z) -2 - \theta(\theta-1)z^2\right| \leq 64\cdot z^3 \leq 64\varepsilon\cdot z^2.
\end{align*}
Again by the extended mean value theorem, it is written that
\begin{align*}
G_{\theta}'(z) = G_{\theta}'(0) + G_{\theta}''(0)z+\frac{G_{\theta}'''(d_{z})z^2}{2!},
\end{align*}
for some $d_{z} \in [0,z]$. Thus we gather that
\begin{align*}
|G_{\theta}'(z)| \leq 2\theta|\theta-1|z + 192\cdot z^2 \leq (2\theta|\theta-1|+192\varepsilon)z.
\end{align*}
\end{proof}
\begin{lem}
\label{kthetabound}
$k_{\theta}(x) \ll x^{\frac{1}{2\theta}}$.
\end{lem}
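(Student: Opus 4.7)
The plan is to rescale by $s_{\theta}(x)$, reducing the defining identity $I_{\theta,x}(k_{\theta}(x))=x$ to a statement about $G_{\theta}$ near the origin, and then extract the bound from the quadratic Taylor expansion furnished by Lemma \ref{Glocbound}.

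Setting $z := k_{\theta}(x)/s_{\theta}(x)$, the scaling $I_{\theta,x}(t)=s_{\theta}(x)^{\theta}\,G_{\theta}(t/s_{\theta}(x))$ combined with $I_{\theta,x}(k_{\theta}(x))=x$ yields
\[
G_{\theta}(z)-2 \;=\; \frac{x-2s_{\theta}(x)^{\theta}}{s_{\theta}(x)^{\theta}} \;=\; \frac{E_{\theta}(x)}{s_{\theta}(x)^{\theta}}.
\]
Since $\theta\in(0,1)\cup(1,2)$, the number $\theta(\theta-1)$ is nonzero; fix $\varepsilon_{\theta}\in(0,1/2]$ small enough that $64\varepsilon_{\theta}<\tfrac12|\theta(\theta-1)|$ (for example $\varepsilon_{\theta}=\theta|\theta-1|/200$). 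By Lemma \ref{kthetasize1}, $k_{\theta}(x)=o(s_{\theta}(x))$, so $z\leq \varepsilon_{\theta}$ holds for all $x$ beyond a threshold depending only on $\theta$.

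In that regime, Lemma \ref{Glocbound} together with the reverse triangle inequality gives
\[
|G_{\theta}(z)-2| \;\geq\; \bigl(|\theta(\theta-1)|-64\varepsilon_{\theta}\bigr)\,z^{2} \;\geq\; \tfrac12|\theta(\theta-1)|\,z^{2}.
\]
Combining this with the displayed identity, invoking Lemma \ref{Errorsizelem} for $|E_{\theta}(x)|\ll x^{1-1/\theta}$, and noting $s_{\theta}(x)^{\theta}\asymp x$ directly from the definition of $s_{\theta}(x)$, we obtain
\[
z^{2} \;\ll\; \frac{|E_{\theta}(x)|}{s_{\theta}(x)^{\theta}} \;\ll\; \frac{x^{1-1/\theta}}{x} \;=\; x^{-1/\theta}.
\]
Hence $k_{\theta}(x)=z\cdot s_{\theta}(x) \ll x^{-1/(2\theta)}\cdot x^{1/\theta} = x^{1/(2\theta)}$, with the finitely many small-$x$ cases absorbed into the implied constant.

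The one delicate point is the choice of $\varepsilon_{\theta}$ so that the leading $z^{2}$-term in Lemma \ref{Glocbound} dominates its remainder; this is precisely where the hypothesis $\theta\neq 1$ is used. Sign considerations (which, by Lemma \ref{Errorsignlem}, align $\theta(\theta-1)$ with $E_{\theta}(x)$) are not needed for this direction, since the estimate is carried out in absolute value throughout.
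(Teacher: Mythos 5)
Your proof is correct and follows essentially the same route as the paper: rescale by $s_{\theta}(x)$ to reduce the identity $I_{\theta,x}(k_{\theta}(x))=x$ to a statement about $G_{\theta}$ near $0$, apply Lemma \ref{Glocbound} with a small $\varepsilon$ chosen in terms of $\theta|\theta-1|$, and conclude via Lemmas \ref{kthetasize1} and \ref{Errorsizelem} together with $s_{\theta}(x)^{\theta}\asymp x$. The only difference is cosmetic: the paper treats $\theta\in(0,1)$ and $\theta\in(1,2)$ as separate cases with one-sided (signed) inequalities, whereas you run a single absolute-value argument via the reverse triangle inequality, which is equally valid.
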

\begin{proof}
Observe that
\begin{align*}
\frac{I_{\theta,x}(k_{\theta}(x))}{s_{\theta}^{\theta}(x)} = G_{\theta}(\frac{k_{\theta}(x)}{s_{\theta}(x)}).
\end{align*}
By Lemma \ref{kthetasize1} there exists a number $N_{\theta}$ so that if $x \geq N_{\theta}$ then 
\begin{align*}
\left|\frac{k_{\theta}(x)}{s_{\theta}(x)}\right| \leq \frac{\theta|\theta-1|}{128}.
\end{align*}
\begin{itemize}
\item If $\theta \in (0,1)$ with Lemma \ref{Glocbound} and $\varepsilon = \frac{\theta|\theta-1|}{128}$ we have
\begin{align*}
\frac{I_{\theta,x}(k_{\theta}(x))}{s_{\theta}^{\theta}(x)} \leq 2 + \frac{\theta(\theta - 1)}{2}\left(\frac{k_{\theta}(x)}{s_{\theta}(x)}\right)^2.
\end{align*}
Using $I_{\theta,x}(k_{\theta}(x)) = x = 2s_{\theta}(x)^{\theta} +E_{\theta}(x)$ we obtain
\begin{align*}
\frac{\theta(1-\theta)}{2}\left(\frac{k_{\theta}(x)}{s_{\theta}(x)}\right)^2 \leq \frac{-E_{\theta}(x)}{s_{\theta}(x)^{\theta}},
\end{align*}
which gives us $k_{\theta}(x) \ll (-E_{\theta}(x))^{1/2}s_{\theta}(x)^{1-\frac{\theta}{2}}$. With Lemma \ref{Errorsizelem} we have
\begin{align*}
k_{\theta}(x) \ll (x^{1-1/\theta})^{1/2}(x^{1/\theta})^{1-\frac{\theta}{2}} = x^{\frac{1}{2\theta}}.
\end{align*}
\item If $\theta \in (1,2)$, we mimic the above steps, except using the inequality
\begin{align*}
\frac{I_{\theta,x}(k_{\theta}(x))}{s_{\theta}^{\theta}(x)} \geq 2 + \frac{\theta(\theta - 1)}{2}\left(\frac{k_{\theta}(x)}{s_{\theta}(x)}\right)^2.
\end{align*}
\end{itemize}
\end{proof}
Let \[
l_{\theta}(x) := \begin{cases}
                    \lfloor k_{\theta}(x)\rfloor & \theta \in (0,1)\\
                    \lceil k_{\theta}(x)\rceil & \theta \in (1,2),
                 \end{cases}
\]
so that with Lemma \ref{incdeclem} we have
\begin{align*}
(s_{\theta}(x) + l_{\theta}(x))^{\theta} + (s_{\theta}(x) - l_{\theta}(x))^{\theta} 
&= I_{\theta,x}(l_{\theta}(x)) \\
&\geq I_{\theta,x}(k_{\theta}(x))=x.
\end{align*}
We compute
\begin{align*}
I_{\theta,x}(l_{\theta}(x))-I_{\theta,x}(k_{\theta}(x)) &= s_{\theta}(x)^{\theta}\left(G_{\theta}(\frac{l_{\theta}(x)}{s_{\theta}(x)})- G_{\theta}(\frac{k_{\theta}(x)}{s_{\theta}(x)})\right)\\
&\leq s_{\theta}(x)^{\theta}\cdot G_{\theta}'(m_{\theta}(x))\cdot\frac{1}{s_{\theta}(x)},
\end{align*}
noting that in the above step, we applied the mean value theorem to the difference $G_{\theta}(\frac{l_{\theta}(x)}{s_{\theta}(x)})- G_{\theta}(\frac{k_{\theta}(x)}{s_{\theta}(x)})$, and $m_{\theta}(x)$ is a value between the numbers $\frac{k_{\theta}(x)}{s_{\theta}(x)}$ and $\frac{l_{\theta}(x)}{s_{\theta}(x)}$. With Lemmas \ref{Glocbound} and \ref{kthetasize1} it is noted that
\begin{align*}
I_{\theta,x}(l_{\theta}(x))-I_{\theta,x}(k_{\theta}(x)) \ll s_{\theta}(x)^{\theta}\cdot \frac{k_{\theta}(x)}{s_{\theta}(x)}\cdot\frac{1}{s_{\theta}(x)}.
\end{align*}
With $x^{1/\theta} \ll s_{\theta}(x) \ll x^{1/\theta}$ and Lemma \ref{kthetabound} we evaluate that
\begin{align*}
I_{\theta,x}(l_{\theta}(x))-I_{\theta,x}(k_{\theta}(x)) \ll (x^{1/\theta})^{\theta-2}x^{\frac{1}{2\theta}} = x^{1-\frac{3}{2\theta}}.
\end{align*}
Hence for $x$ sufficiently large, we have
\begin{align*}
x \leq (s_{\theta}(x) + l_{\theta}(x))^{\theta} + (s_{\theta}(x) - l_{\theta}(x))^{\theta} = x + O(x^{1-\frac{3}{2\theta}}),
\end{align*}
which proves the proposition for $\theta \in (0,1)\cup(1,2)$.

\subsection{The case $\theta \geq 2$} \text{}

\text{ }

\hspace{-0.4cm}Let $x\geq 2$, $f_{-,\theta}(x) = \lfloor x^{1/\theta} \rfloor^{\theta}$ and $f_{+,\theta}(x) = \lceil x^{1/\theta} \rceil^{\theta}$. Observe that we have the inequalities $f_{-,\theta}(x) \leq x$ and $f_{+,\theta}(x) \geq x$. Note that by the mean value theorem, we have
\begin{align*}
x - f_{-,\theta}(x) = (x^{1/\theta})^{\theta}-\lfloor x^{1/\theta} \rfloor^{\theta} \leq \theta\cdot x^{1-\frac{1}{\theta}}.
\end{align*}
With $y = x - f_{-,\theta}(x),$ and the mean value theorem we have
\begin{align*}
f_{+,\theta}(y) - y \leq \theta\cdot y^{1-\frac{1}{\theta}}.
\end{align*}
Hence we have 
\begin{align*}
x \leq \left\lceil\left(x-f_{-,\theta}(x)\right)^{1/\theta}\right\rceil^{\theta} + \left\lfloor x^{1/\theta}\right\rfloor^{\theta} &\leq x + \theta\cdot\left(\theta\cdot x^{1-\frac{1}{\theta}}\right)^{1-\frac{1}{\theta}}\\
&= x+\theta^{2-\frac{1}{\theta}}\cdot x^{1-\frac{2}{\theta}+\frac{1}{\theta^2}}.
\end{align*}
\subsection{Proof of Corollary \ref{cortwotheta}} \text{ }

\text{ }

\hspace{-0.4cm} Let $n \geq 5^{1/\theta}$ and $R(\theta) = \{j^{\theta}: \ j\in \mathbb{N}\}$. We can find an element $\alpha^{*} \in [\frac{n^{\theta}}{5}, \frac{2n^{\theta}}{5}]$ so that $\{\alpha^{*}\} = \{\alpha\}$. Due to Theorem \ref{main5} we can find an element $\beta_{n} \in \BigSquare{\theta}$ so that 
\begin{align*}
0\leq \beta_{n}-\alpha^{*}<D_{\theta}\cdot (\alpha^{*})^{1-\frac{3}{2\theta}} &\leq D_{\theta}\cdot 5^{\frac{3}{2\theta}-1}n^{\theta-3/2}\\
&= C_{\theta}\cdot n^{\theta-3/2}.
\end{align*}
\begin{itemize}
\item If $\beta_{n} \in R(\theta)$ then $\beta_{n} = j_{n}^{\theta}$ for some $j_{n} \in \mathbb{N}$. For sufficiently large $n$ we must have $j_{n} \leq n$. Observe that
\begin{align*}
C_{\theta}\cdot n^{\theta-3/2}\geq \|j_{n}^{\theta}-\alpha^{*}\| = \|j_{n}^{\theta}+1^{\theta}-\alpha^{*}\| = \|j_{n}^{\theta}+1^{\theta}-\alpha\|.
\end{align*}
\item If $\beta_{n}\not\in R(\theta)$ there exists positive integers $k_{n}$ and $l_{n}$ so that $\beta_{n}= k_{n}^{\theta}+l_{n}^{\theta}$. For sufficiently large $n$ we must have $k_{n},l_{n} \leq n$. Observe that
\begin{align*}
C_{\theta}\cdot n^{\theta-3/2}\geq \|k_{n}^{\theta}+l_{n}^{\theta}-\alpha^{*}\| = \|k_{n}^{\theta}+l_{n}^{\theta}-\alpha\|.
\end{align*}
\end{itemize}
\section{Proof of Theorem \ref{almost}}
Without loss of generality assume that $0<\rho(\cdot) \leq 1$ and let $0<\gamma < \Upsilon$ be real numbers. Define
\begin{align*}
P(m):= \left\{(a_{1},\ldots,a_{k}): a_{j}\in \mathbb{N}, \ 1 \leq a_{1}\ldots \leq a_{k} = m\right\},
\end{align*}
and put $f_{\theta} : \mathbb{N}^{k} \rightarrow \R$ with
\begin{align*}
f_{\theta}(a_{1},\ldots,a_{k}) = \sum_{j=1}^{k}a_{j}^{\theta}.
\end{align*}
\begin{lem}
\label{fbound2}
If $m \geq C_{\gamma}$ is a sufficiently large integer, $\beta \in [\gamma, \Upsilon]$ and $\omega \in P_{k}(m)$ then
\begin{align*}
f_{\beta - \frac{1}{\log(m)}}(\omega) \leq f_{\beta}(\omega) -1
\end{align*}
and
\begin{align*}
f_{\beta + \frac{1}{\log(m)}}(\omega) \geq f_{\beta}(\omega) +1.
\end{align*}
\end{lem}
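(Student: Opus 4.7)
\textbf{Proof proposal for Lemma \ref{fbound2}.} The plan is to view $\theta \mapsto f_{\theta}(\omega)$ as a smooth function of $\theta$ on the interval $[\gamma - 1, \Upsilon + 1]$ and apply the mean value theorem to the increments $f_{\beta \pm 1/\log(m)}(\omega) - f_{\beta}(\omega)$. The derivative is
\[
\frac{d}{d\theta} f_{\theta}(\omega) \;=\; \sum_{j=1}^{k} a_{j}^{\theta}\log(a_{j}),
\]
and since every term is nonnegative (as $a_{j} \geq 1$) and $a_{k} = m$, this derivative is bounded below by $m^{\theta}\log(m)$.

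Next I would restrict attention to $\theta' \in [\beta - 1/\log(m),\, \beta + 1/\log(m)]$. On this range $m^{\theta'} \geq m^{\beta - 1/\log(m)} = m^{\beta}/e \geq m^{\gamma}/e$, so the derivative satisfies
\[
\frac{d}{d\theta} f_{\theta}(\omega)\bigg|_{\theta = \theta'} \;\geq\; \frac{m^{\gamma}\log(m)}{e}.
\]
Applying the mean value theorem with step $h = 1/\log(m)$ yields
\[
f_{\beta + 1/\log(m)}(\omega) - f_{\beta}(\omega) \;\geq\; h \cdot \frac{m^{\gamma}\log(m)}{e} \;=\; \frac{m^{\gamma}}{e},
\]
and symmetrically $f_{\beta}(\omega) - f_{\beta - 1/\log(m)}(\omega) \geq m^{\gamma}/e$.

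Finally, I would choose $C_{\gamma}$ so that $m \geq C_{\gamma}$ forces $m^{\gamma}/e \geq 1$, e.g.\ $C_{\gamma} = \lceil e^{1/\gamma} \rceil$, which together with the above gives both claimed inequalities. I do not anticipate a real obstacle here: the only thing to watch is that the lower bound $m^{\theta'} \geq m^{\gamma}/e$ is uniform in $\theta' \in [\beta - 1/\log(m), \beta + 1/\log(m)]$, which is why we work with a fixed $\gamma$ rather than with $\beta$ directly. Implicit in this is the standing assumption $\gamma > 0$ from the surrounding setup, which is what allows $m^{\gamma}/e$ to exceed $1$ for large $m$.
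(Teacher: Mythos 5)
Your proof is correct and is essentially the paper's argument in different packaging: both isolate the dominant term $a_{k}=m$ and use that shifting the exponent by $1/\log(m)$ rescales $m^{\theta}$ by a factor $e^{\pm 1}$, so the increment is $\gg m^{\gamma}\geq 1$ for $m\geq C_{\gamma}$. The paper does this by a direct term-by-term comparison ($f_{\beta-1/\log(m)}(\omega)\leq f_{\beta}(\omega)-m^{\beta}+m^{\beta}e^{-1}$) rather than via the mean value theorem, but the content is the same.
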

\begin{proof}
Note that 
\begin{align*}
f_{\beta - \frac{1}{\log(m)}}(\omega) &\leq f_{\beta}(\omega) -m^{\beta}+m^{\beta - \frac{1}{\log(m)}}\\
&= f_{\beta}(\omega)-m^{\beta}+m^{\beta}\cdot e^{-1}\\
&\leq f_{\beta}(\omega)-m^{\gamma}+m^{\gamma}\cdot e^{-1},
\end{align*}
which is less than $f_{\beta}(\omega)-1$ for sufficiently large $m$. The second inequality is derived similarly.
\end{proof}
For $\omega \in P(m)$ we let 
\begin{align*}
D(\omega) = \left\{\theta: \ \theta \in [\gamma, \Upsilon], \  f_{\theta}(\omega) \in \mathbb{Z}\right\}.
\end{align*}
and $g(\omega) = \# D(w)$.
\begin{lem}
\label{Integerpointestimate}
If $m \geq 2$ and $\omega \in P(m)$ then $g(\omega) \leq k\cdot m^{\Upsilon}$.
\end{lem}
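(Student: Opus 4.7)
Fix $\omega = (a_1,\dots,a_k) \in P(m)$, so $1 \leq a_1 \leq \cdots \leq a_k = m$ with $m \geq 2$. The plan is to view $\theta \mapsto f_\theta(\omega) = \sum_{j=1}^{k} a_j^\theta$ as a function on $[\gamma,\Upsilon]$, show it is strictly monotonic, and then bound $g(\omega)$ by the number of integers in its image.

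First, I would observe that each $a_j^\theta$ with $a_j \geq 1$ is non-decreasing in $\theta$, while the term $a_k^\theta = m^\theta$ is \emph{strictly} increasing because $m \geq 2$. Summing, $f_\theta(\omega)$ is strictly increasing in $\theta$ on $[\gamma,\Upsilon]$. Consequently $f_\theta(\omega)$ attains each integer value for at most one $\theta$, so
\[
g(\omega) \;=\; \#\bigl(\mathbb{Z} \cap [f_\gamma(\omega),\, f_\Upsilon(\omega)]\bigr).
\]

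Next, I would estimate the endpoints of this interval. Since each $a_j \geq 1$, $f_\gamma(\omega) \geq k$, and since each $a_j \leq m$, $f_\Upsilon(\omega) \leq k m^{\Upsilon}$. Therefore the interval $[f_\gamma(\omega), f_\Upsilon(\omega)]$ has length at most $k m^{\Upsilon} - k$, and the number of integers it contains is at most $k m^{\Upsilon} - k + 1 \leq k m^{\Upsilon}$, using $k \geq 1$ in the last step. This yields the claimed bound $g(\omega) \leq k m^{\Upsilon}$.

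There is no real obstacle here; the only point requiring mild care is the strict monotonicity, which would fail if all $a_j$ were equal to $1$, but this is precisely ruled out by $a_k = m \geq 2$. The rest is a one-line interval-counting estimate.
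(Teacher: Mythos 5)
Your proposal is correct and follows essentially the same route as the paper: strict monotonicity of $\theta \mapsto f_\theta(\omega)$ (forced by $a_k = m \geq 2$) makes the map to integer values injective, and one then counts the integers in the range, bounded via $f_\Upsilon(\omega) \leq k m^{\Upsilon}$. The only cosmetic difference is that you count integers in $[f_\gamma(\omega), f_\Upsilon(\omega)]$ using $f_\gamma(\omega) \geq k$ and $k \geq 1$, whereas the paper counts positive integers up to $\lfloor f_\Upsilon(\omega) \rfloor$ over $(-\infty,\Upsilon]$; both give the stated bound.
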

\begin{proof}
As a function of $\theta \in \mathbb{R}$, $f_{\theta}(\omega)$ is positive, furthermore it is strictly increasing, hence
\begin{align*}
g(\omega) &\leq \#\left\{\theta: \ \theta \in (-\infty, \Upsilon], \  f_{\theta}(\omega) \in \mathbb{Z}\right\}\\
&= \#\left\{\theta: \ \theta \in (-\infty, \Upsilon], \  f_{\theta}(\omega) \in 1,\ldots, \lfloor f_{\Upsilon}(\omega)\rfloor\right\}\\
&\leq f_{\Upsilon}(\omega)\\
&\leq k\cdot m^{\Upsilon}.
\end{align*}
\par \vspace{-\baselineskip} \qedhere
\end{proof}
For $m \geq 2$ we arrange the elements of $D(\omega)$ as
\begin{align*}
d(1,\omega)<\ldots<d(g(\omega),\omega).
\end{align*}
\begin{lem}
\label{dlowebound}
If $h \in \mathbb{N}$, $m \geq 2$ and $\omega \in P(m)$ then 
\begin{align*}
d(h,\omega) \geq \log_{m}\left(\frac{m^{\gamma}+h-1}{k}\right).
\end{align*}
\end{lem}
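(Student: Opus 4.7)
The plan is to exploit strict monotonicity of $f_\theta(\omega)$ in $\theta$ together with the observation that $\omega$ has maximum coordinate $a_k = m$, yielding simultaneously a lower bound and an upper bound on $f_{d(h,\omega)}(\omega)$.

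First I would note that for positive reals, $f_\theta(\omega) = \sum_{j=1}^k a_j^\theta$ is a strictly increasing function of $\theta$ (as already used in Lemma~\ref{Integerpointestimate}). Therefore the values $f_{d(1,\omega)}(\omega) < f_{d(2,\omega)}(\omega) < \cdots < f_{d(h,\omega)}(\omega)$ are $h$ distinct integers. Moreover, since $d(1,\omega) \geq \gamma$ and $\omega \in P(m)$ forces $a_k = m$, we have
\[
f_{d(j,\omega)}(\omega) \geq f_\gamma(\omega) \geq a_k^\gamma = m^\gamma
\]
for every $j$. The smallest integer that is $\geq m^\gamma$ is $\lceil m^\gamma \rceil \geq m^\gamma$, so the $h$-th smallest such integer satisfies
\[
f_{d(h,\omega)}(\omega) \geq m^\gamma + h - 1.
\]

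For the matching upper bound, I would use that every coordinate $a_j \leq a_k = m$, so
\[
f_{d(h,\omega)}(\omega) = \sum_{j=1}^k a_j^{d(h,\omega)} \leq k \cdot m^{d(h,\omega)}.
\]

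Chaining the two inequalities gives $k \cdot m^{d(h,\omega)} \geq m^\gamma + h - 1$, and taking $\log_m$ of both sides yields the claim. There is no real obstacle here — the argument is purely a counting/monotonicity observation combined with the trivial coordinate-size bound — but the key conceptual step is recognising that $a_k = m$ provides \emph{both} the lower bound (via $m^\gamma$ at $\theta = \gamma$) and, through $a_j \leq m$, the upper bound $k m^{d(h,\omega)}$ needed to close the logarithm.
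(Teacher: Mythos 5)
Your proof is correct and essentially matches the paper's: both hinge on the two-sided sandwich $m^\gamma + h - 1 \leq f_{d(h,\omega)}(\omega) \leq k\,m^{d(h,\omega)}$. The only cosmetic difference is that the paper uses the exact equality $f_{d(h,\omega)}(\omega) = f_{d(1,\omega)}(\omega) + h - 1$ (the attained integer values are consecutive, by continuity and strict monotonicity of $\theta \mapsto f_\theta(\omega)$), whereas you argue via distinctness alone; both yield the same bound.
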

\begin{proof}
Observe that 
\begin{align*}
k\cdot m^{d(h,\omega)} &\geq f_{d(h,\omega)}(\omega)\\
&= f_{d(1,\omega)}(\omega)+h-1\\
&\geq m^{\gamma}+h-1.
\end{align*}
\par \vspace{-\baselineskip} \qedhere
\end{proof}
Using Lemma \ref{fbound2}, for $h \in \mathbb{N}$, $m \geq C_{\gamma}$ and $\omega \in P(m)$ we can define quantities $u_{+}(h,\omega)$ and $u_{-}(h,\omega)$ so that
\begin{align*}
f_{d(h,\omega)+u_{+}(h,\omega)}(\omega) = f_{d(h,\omega)}(\omega)+m^{-k}\cdot \rho(m)
\end{align*}
and
\begin{align*}
f_{d(h,\omega)-u_{-}(h,\omega)}(\omega) = f_{d(h,\omega)}(\omega)-m^{-k}\cdot \rho(m).
\end{align*}
Furthermore by Lemma \ref{fbound2} we have the inequalities
\begin{align}
\label{upmbasicbound}
0<u_{\pm}(h,\omega)\leq \frac{1}{\log(m)}.
\end{align}
We can improve these inequalities.
\begin{lem}
\label{upmenhancedbound}
For $h\in \mathbb{N}$, $m \geq C_{\gamma}$ and $\omega \in P(m)$ we have
\begin{align*}
0<u_{\pm}(h,\omega) \leq \frac{m^{-k-d(h,\omega)}\cdot \rho(m)}{(3-e)\log(m)}.
\end{align*}
\end{lem}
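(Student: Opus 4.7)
My plan is to bound $u_{\pm}(h,\omega)$ by isolating the dominant contribution from the largest coordinate $a_k = m$ and combining it with a sharp analytic inequality on $[0,1]$ that produces the constant $3-e$.

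First, since $a_j \geq 1$ for every $j$, the functions $\theta \mapsto a_j^\theta$ are non-decreasing, so each summand of $f_{\theta+u_+}(\omega)-f_\theta(\omega)$ and of $f_\theta(\omega)-f_{\theta-u_-}(\omega)$ is non-negative. Keeping only the $j=k$ term and using $a_k=m$ gives
\[
m^{-k}\rho(m) = f_{d(h,\omega)+u_+}(\omega)-f_{d(h,\omega)}(\omega) \geq m^{d(h,\omega)}\bigl(m^{u_+}-1\bigr),
\]
\[
m^{-k}\rho(m) = f_{d(h,\omega)}(\omega)-f_{d(h,\omega)-u_-}(\omega) \geq m^{d(h,\omega)}\bigl(1-m^{-u_-}\bigr).
\]
By \eqref{upmbasicbound} we have $t_\pm := u_\pm(h,\omega)\log m \in (0,1]$, so the task reduces to lower-bounding $e^{t}-1$ and $1-e^{-t}$ on $[0,1]$ by a suitable linear function of $t$.

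Next I would prove the elementary inequality
\[
\min\bigl(e^{t}-1,\; 1-e^{-t}\bigr) \geq (3-e)\, t \qquad \text{for all } t \in [0,1].
\]
For $e^{t}-1$ this is trivial since $e^{t}-1 \geq t \geq (3-e)t$. For $1-e^{-t}$, let $F(t):=1-e^{-t}-(3-e)t$; then $F(0)=0$ and $F'(t)=e^{-t}-(3-e)$, which stays strictly positive on $[0,1]$ because $e^{-1} \approx 0.368 > 0.282 \approx 3-e$ (equivalently $-\log(3-e) > 1$), so $F$ is increasing and $F(t)\geq 0$ throughout.

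Substituting $t = u_{\pm}\log m$ into these inequalities and plugging into the two displayed bounds yields
\[
m^{-k}\rho(m) \geq (3-e)\, u_{\pm}(h,\omega)\, m^{d(h,\omega)}\log m,
\]
which on rearrangement is exactly the claimed bound. The only slightly delicate step is the identification of the constant $3-e$ and verification of the sign of $F'$; everything else is direct substitution and the monotonicity already encoded in the definitions of $u_\pm$ together with the crude bound \eqref{upmbasicbound}.
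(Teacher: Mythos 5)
Your proof is correct and follows essentially the same route as the paper: isolate the dominant $m^\theta$ term (which is exactly the paper's bound $|m^{d\pm u_\pm}-m^d|\leq |f_{d\pm u_\pm}(\omega)-f_d(\omega)|$), rescale to put $t=u_\pm\log m\in(0,1]$, and then lower-bound $|e^{\pm t}-1|$ by $(3-e)t$. The only difference is cosmetic: the paper derives $|e^x-1|\geq(3-e)|x|$ for $|x|\leq1$ directly from the Taylor series, whereas you split into the $+$ and $-$ cases and use monotonicity of $F(t)=1-e^{-t}-(3-e)t$; both are valid and yield the same bound.
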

\begin{proof}
Note that
\begin{align*}
\left|m^{d(h,\omega)\pm u_{\pm}(h,\omega)} - m^{d(h,\omega)}\right| &\leq \left|f_{d(h,\omega) \pm u_{\pm}(h,\omega)}(\omega) - f_{d(h,\omega)}(\omega)\right|\\
&= m^{-k}\cdot\rho(m).
\end{align*}
Implying that
\begin{align}
\label{upmMedium}
\left|e^{\pm\log(m)u_{\pm}(h,\omega)}-1\right|\leq m^{-k-d(h,\omega)}\cdot\rho(m).
\end{align}
 For $|x| \leq 1$ note the inequality 
\begin{align}
\label{Einequality}
|e^{x}-1| \geq |x| - \sum_{j=2}^{\infty} \frac{|x|^{j}}{j!} \geq |x| - \sum_{j=2}^{\infty} \frac{|x|}{j!} = (3-e)|x|.
\end{align}
Using \eqref{upmbasicbound}, \eqref{upmMedium} and \eqref{Einequality} we obtain
\begin{align*}
(3-e)\cdot \left|\log(m)\cdot u_{\pm}(h,\omega) \right| \leq m^{-k-d(h,\omega)}\cdot \rho(m),
\end{align*}
and the result follows.
\end{proof}
Put
\begin{align*}
U(\omega) = \{\theta: \ \theta \in [\gamma, \Upsilon], \ \|f_{\theta}(\omega)\| \leq m^{-k}\cdot\rho(m) \}.
\end{align*}
With $\mu$ denoting the Lebesgue measure function we write the following lemma.
\begin{lem}
\label{Measureinequality1}
Let $m \geq C_{\gamma}$, $\omega \in P(m)$ we have the bound
\begin{align*}
\mu(U(\omega)) \leq B_{k,\Upsilon}\cdot m^{-k}\cdot \rho(m),
\end{align*}
for some constant $B_{k,\Upsilon}>0$.
\end{lem}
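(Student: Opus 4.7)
The plan is to exploit the strict monotonicity of $\theta \mapsto f_{\theta}(\omega)$ (noted in the proof of Lemma \ref{Integerpointestimate}) to write $U(\omega)$ explicitly as a disjoint union of short intervals, each centred at one of the points $d(h,\omega)$, and then to sum the lengths supplied by Lemma \ref{upmenhancedbound}.

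First I would observe that since $f_{\theta}(\omega)$ is continuous and strictly increasing in $\theta$, the condition $\|f_{\theta}(\omega)\| \leq m^{-k}\rho(m)$ is equivalent to $|f_{\theta}(\omega) - n| \leq m^{-k}\rho(m)$ for some integer $n$, and by monotonicity such an integer $n$ must equal $f_{d(h,\omega)}(\omega)$ for some $h \in \{1,\ldots,g(\omega)\}$. Hence, using the definitions of $u_{\pm}(h,\omega)$,
\begin{align*}
U(\omega) \;=\; \bigcup_{h=1}^{g(\omega)} \bigl[\, d(h,\omega) - u_{-}(h,\omega),\; d(h,\omega) + u_{+}(h,\omega)\,\bigr],
\end{align*}
and the union is disjoint provided $m$ is large enough (Lemma \ref{fbound2} guarantees consecutive $d(h,\omega)$ are separated by more than $2/\log m$, which dominates $u_{\pm}$ via \eqref{upmbasicbound}; strict disjointness is not actually needed, since we only want an upper bound on $\mu(U(\omega))$).

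Next I would apply Lemma \ref{upmenhancedbound} to bound each interval by $\frac{2 m^{-k-d(h,\omega)}\rho(m)}{(3-e)\log m}$, and Lemma \ref{dlowebound} to bound $m^{-d(h,\omega)} \leq k/(m^{\gamma}+h-1)$. Combining these,
\begin{align*}
\mu(U(\omega)) \;\leq\; \frac{2k\, m^{-k}\rho(m)}{(3-e)\log m} \sum_{h=1}^{g(\omega)} \frac{1}{m^{\gamma}+h-1}.
\end{align*}
By Lemma \ref{Integerpointestimate} the sum has at most $km^{\Upsilon}$ terms, and comparing with an integral yields
\begin{align*}
\sum_{h=1}^{g(\omega)} \frac{1}{m^{\gamma}+h-1} \;\leq\; \frac{1}{m^{\gamma}} + \log\!\left(\frac{m^{\gamma}+km^{\Upsilon}-1}{m^{\gamma}}\right) \;\ll_{k,\Upsilon}\; \log m.
\end{align*}
The $\log m$ produced by the harmonic sum cancels the $\log m$ in the denominator, leaving a constant $B_{k,\Upsilon}>0$ multiplying $m^{-k}\rho(m)$, as required.

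The only subtle point is the cancellation between the $1/\log m$ factor gained in Lemma \ref{upmenhancedbound} and the $\log m$ lost in the harmonic sum; both ultimately come from the fact that $f_{\theta}(\omega)$ behaves like $m^{\theta}$, so without the refined bound of Lemma \ref{upmenhancedbound} one would obtain only $\mu(U(\omega)) \ll m^{-k}\rho(m)\cdot m^{\Upsilon - \gamma}$, which is far too weak. With the enhanced bound in hand, the rest is a routine summation.
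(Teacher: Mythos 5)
Your proposal is correct and follows essentially the same route as the paper's proof: cover $U(\omega)$ by the intervals $[d(h,\omega)-u_{-}(h,\omega),\,d(h,\omega)+u_{+}(h,\omega)]$, bound their lengths via Lemma \ref{upmenhancedbound}, convert $m^{-d(h,\omega)}$ into $k/(m^{\gamma}+h-1)$ via Lemma \ref{dlowebound}, and use Lemma \ref{Integerpointestimate} so the resulting harmonic sum is $O_{k,\Upsilon}(\log m)$, cancelling the $1/\log m$. The only cosmetic difference is that you assert equality (and discuss disjointness) where the paper uses only the inclusion $U(\omega)\subseteq\bigcup_{h}[\cdot]$, which, as you note yourself, is all that is needed.
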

\begin{proof}
It is noted that
\begin{align*}
U(\omega) \subseteq \bigcup_{h=1}^{g(\omega)}\left[d(h,\omega)-u_{-}(h,\omega),d(h,\omega)+u_{+}(h,\omega) \right]
\end{align*}
Hence with Lemma \ref{upmenhancedbound} we have the inequality
\begin{align*}
\mu(U(\omega)) &\leq \sum_{h=1}^{g(\omega)}\frac{2\cdot m^{-k-d(h,\omega)}\cdot \rho(m)}{(3-e)\log(m)} \\
&=\frac{2\cdot m^{-k}\cdot \rho(m)}{(3-e)\log(m)}\sum_{h=1}^{g(\omega)}m^{-d(h,\omega)}.
\end{align*}
Using Lemma \ref{dlowebound} we continue the above inequality as
\begin{align*}
\mu(U(\omega)) &\leq \frac{2\cdot m^{-k}\cdot \rho(m)}{(3-e)\log(m)}\sum_{h=1}^{g(\omega)}m^{-\log_{m}\left(\frac{m^{\gamma}+h-1}{k}\right)}\\
&\leq \frac{2\cdot m^{-k}\cdot \rho(m)}{(3-e)\log(m)}\sum_{h=1}^{g(\omega)}\frac{k}{m^{\gamma}+h-1}.
\end{align*}
Using Lemma \ref{Integerpointestimate} we gather the estimate
\begin{align*}
\mu(U(\omega)) &\leq \frac{2\cdot m^{-k}\cdot \rho(m)}{(3-e)\log(m)}\sum_{h=1}^{\lceil k \cdot 
 m^{\Upsilon}\rceil}\frac{k}{h}\\
&\leq B_{k,\Upsilon}\cdot m^{-k}\cdot \rho(m), 
\end{align*}
for some constant $B_{k,\Upsilon}>0$.
\end{proof}
Let
\begin{align*}
V(m) &:= \left\{\theta :\ \theta \in [\gamma, \Upsilon], \ \exists \omega \in P(m), \ \|f_{\theta}(\omega)\| \leq m^{-k}\cdot \rho(m)\right\}\\
&\hspace{0.1cm}= \bigcup_{\omega \in P(m)}U(\omega).
\end{align*}
Hence with Lemma \ref{Measureinequality1}, when $m \geq C_{\gamma}$ we have the inequality
\begin{align}
\label{Vmmeasbound}
\mu(V(m)) &\leq \sum_{\omega \in P(m)}\mu(U(\omega)) \notag\\
&\leq m^{k-1}\cdot B_{k,\Upsilon}\cdot m^{-k}\cdot \rho(m) \notag\\
&=B_{k,\Upsilon}\cdot m^{-1}\cdot \rho(m).
\end{align}
Let
\begin{align*}
W &= \left\{
\theta \colon \theta \in [\gamma, \Upsilon], \ 
\exists m_1 < m_2 < \dots \in \mathbb{N}, \ 
\exists \omega_j \in P(m_j), \ 
\right. \\
  &\hspace{2.85cm}\left. \| f_{\theta}(\omega_j) \| \leq m_j^{-k} \cdot \rho(m_j) 
\right\} \\
  &= \bigcap_{n=1}^{\infty} \bigcup_{m=n}^{\infty} V(m).
\end{align*}
With inequality \eqref{Vmmeasbound} and $\sum_{m=1}^{\infty}m^{-1}\cdot \rho(m) < \infty$, it is deduced that $\mu(W) = 0$, due to Borel-Cantelli lemma for measure spaces. Hence for almost all $\theta \in [\gamma, \Upsilon]$ the inequality $\|f_{\theta}(\omega)\| \leq \|\omega\|_{\infty}^{-k}\cdot \rho(\|\omega\|_{\infty})$ has finitely many solutions for $\omega \in \mathbb{N}^{k}$. Theorem \ref{almost} follows as $0<\gamma < \Upsilon$ are arbitrary.

\section{Proof of Theorem \ref{DubCounter}}
It is enough to show that when $r,s \in \mathbb{N}$ and $s \geq 2$, there are uncountably many $\theta \in [r/s, r/(s-1)]$ which satisfy the property that the inequality $0<\|n^{\theta}\| \leq \Phi(n)$ holds for infinitely many integers. Put
\begin{align*}
S = \{(s_{j})_{j=0}^{\infty}: s_{j} \in \N, \ s_{0}=s, \ s_{n+1}\geq s_{n} \},
\end{align*}
so that $S$ is a subset of integer-valued sequences. Define $\phi : S\rightarrow \R$ so that 
\begin{align*}
\phi((s_{j})_{j=0}^{\infty}) = \frac{r}{s}+\sum_{d=1}^{\infty}\frac{1}{\prod_{k=0}^{d}s_{k}}.
\end{align*}
Observe that $\phi$ is injective on $S$ and lies in the interval $[r/s, r/(s-1)]$. With $\mathbf{s} = (s_{j})_{j=0}^{\infty}$, we see that for each $h \in \mathbb{N}\cup\{0\}$ there exists a positive number $U(\mathbf{s},h)$ so that
\begin{align*}
\phi(\mathbf{s}) = \frac{U(\mathbf{s},h)}{\prod_{k=0}^{h}s_{k}}+\sum_{d=h+1}^{\infty}\frac{1}{\prod_{k=0}^{d}s_{k}}.
\end{align*}
Define the set
\begin{align*}
S(\Phi) &:=\left\{\mathbf{s}: \mathbf{s}\in S, \ \forall h \in \mathbb{N}\cup\{0\} \text{ we have }\right.\\
\hspace{2cm}&\left. \left|2^{U(\mathbf{s},h)+\frac{1}{s_{h+1}-1}}-2^{U(\mathbf{s},h)}\right|\leq \Phi\left(2^{\prod_{k=0}^{h}s_{k}}\right)\right\},
\end{align*}
and put
\begin{align*}
S^{*}(\Phi):= \left\{\phi(\mathbf{s}):\ \mathbf{s}\in S(\Phi)\right\}\setminus\{x: \ x>0, \ \exists n \in \N_{\geq 2} \text{ so that }\|n^{x}\|=0\}.
\end{align*}
It is noted that $S^{*}(\Phi)$ has uncountably many elements in the interval $[r/s, r/(s-1)]$, furthermore if $\theta \in S^{*}(\Phi)$ has the representation
\begin{align*}
\theta = \frac{r}{s}+\sum_{d=1}^{\infty}\frac{1}{\prod_{k=0}^{d}s_{k}},
\end{align*}
then 
\begin{align*}
\left|(2^{\prod_{k=0}^{h}s_{k}})^{\theta} - 2^{U(\mathbf{s},h)}\right| &= \left| 2^{U(\mathbf{s},h)+\sum_{d=h+1}^{\infty}\frac{1}{\prod_{k=h+1}^{d}s_{k}}}-2^{U(\mathbf{s},h)} \right|\\
&\leq \left|2^{U(\mathbf{s},h)+\frac{1}{s_{h+1}-1}}-2^{U(\mathbf{s},h)}\right| \leq \Phi\left(2^{\prod_{k=0}^{h}s_{k}}\right).
\end{align*}
Thus with such $\theta$ we have the inequality
\begin{align*}
0<\left\|\left(2^{\prod_{k=0}^{h}s_{k}}\right)^{\theta}\right\| \leq \Phi\left(2^{\prod_{k=0}^{h}s_{k}}\right)
\end{align*}
for $h=0,1,\ldots$.
\section{Acknowledgements}

\begin{itemize}
\item The author thanks Bryce Kerr for his thoughtful review of an earlier version of this manuscript. Initially, when the author considered the case $d=2$ in Theorem \ref{main}, Bryce's insightful suggestion to select $x_j$ with $M(x_j) \leq 2^j$ (rather than $M(x_{j}) \leq j$ as originally thought of by the author) in Lemma \ref{AlgorithmicIdea} directly improved the exponent in Theorem \ref{main} (for $d=2$) from $\frac{-k}{8}+\frac{1}{8}$ to $\frac{-k}{4}+\frac{1}{4}$.

\item The author expresses deep gratitude to Igor Shparlinski for recommending Steinerberger's paper \cite{Steinerberger} and for invaluable insights.

\item The author thanks Stefan Steinerberger and Art{\=u}ras Dubickas for their continued support and encouragement.
 
\item Finally, the author gratefully acknowledges the support of the Australian Government Research Training Program (RTP) Scholarship and the University of New South Wales for a top-up scholarship, both of which were instrumental in this work.
\end{itemize}

\end{document}